\documentclass[11pt,hyp,]{nyjm}
\usepackage[utf8]{inputenc}

\title[Divisorial and geometric gonality]{Divisorial and geometric gonality of higher-rank tropical curves}
\author[J. van Dobben de Bruyn, D. Holmes, D. van der Vorm]{Josse van Dobben de Bruyn, David Holmes, and David van der Vorm}

\email{holmesdst@math.leidenuniv.nl}
\email{w.d.van.der.vorm@umail.leidenuniv.nl}
\email{j.vandobbendebruyn@tudelft.nl}

\usepackage{mathtools}
\usepackage{graphicx}
\usepackage{amsmath}
\usepackage{amsthm}
\usepackage{amssymb}
\usepackage{pstricks}
\usepackage{tikz}   
\usepackage{tikz-cd}
\usetikzlibrary {positioning}
\usepackage{mathrsfs}
\usepackage[hidelinks]{hyperref}
\hypersetup{nesting=true,debug=true,naturalnames=true}
\usepackage{upref}

\usetikzlibrary{decorations.pathreplacing}

\newcommand{\comment}[1]{...}

\newcommand{\Z}{\mathbb{Z}}
\newcommand{\N}{\mathbb{N}}
\newcommand{\R}{\mathbb{R}}
\renewcommand{\phi}{\varphi}

\newcommand{\Div}{\operatorname{Div}}
\newcommand{\Prin}{\operatorname{Prin}}
\newcommand{\rnk}{r}
\newcommand{\G}{\Gamma}

\newcommand{\emp}[1]{\textbf{\emph{#1}}}
\renewcommand{\:}{\colon}

\renewcommand{\>}{\rangle}

\newcommand{\gon}{\operatorname{dgon}}
\newcommand{\ggon}{\operatorname{ggon}}
\newcommand{\PL}{\operatorname{PL}}
\newcommand{\tw}{\operatorname{tw}}
\newcommand{\sn}{\operatorname{sn}}

\newtheorem{theorem}{Theorem}[section]       
\newtheorem{corollary}  [theorem]{Corollary}
\newtheorem{lemma}      [theorem]{Lemma}     
\newtheorem{proposition}[theorem]{Proposition}

\newtheorem*{theorem*}{Theorem}
\newtheorem*{corollary*}{Corollary}
\theoremstyle{definition}
\newtheorem{definition} [theorem]{Definition}
\newtheorem{remark}     [theorem]{Remark}
\newtheorem{example}    [theorem]{Example} 


\begin{document}

\begin{abstract}
    We consider a variant of metrised graphs where the edge lengths take values in a commutative monoid, as a higher-rank generalisation of the notion of a tropical curve. Divisorial gonality, which Baker and Norine defined on combinatorial graphs in terms of a chip firing game, is extended to these monoid-metrised graphs. We define geometric gonality of a monoid-metrised graph as the minimal degree of a horizontally conformal, non-degenerate morphism onto an monoid-metrised tree, and prove that geometric gonality is an upper bound for divisorial gonality in the monoid-metrised case. We also show the existence of a subdivision of the underlying graph whose gonality is no larger than the monoid-metrised gonality. We relate this to the minimal degree of a map between logarithmic curves. 
\end{abstract}

\maketitle

\tableofcontents

\section{Introduction}

Tropical geometry allows us to exchange information between geometric and combinatorial worlds. 
A fundamental example is that of \emph{gonality}. The older notion is the gonality of a smooth curve, defined as the minimal degree of a map to the projective line. More recently, Baker and Norine defined the \emph{divisorial gonality} of a graph in terms of a certain chip-firing game (see \cite{BakerNorine2007RR}, or Definition \ref{d_divgon}). The connection between these runs via degenerations; a family of smooth curves may degenerate to a stable curve, allowed mild (nodal) singularities. To a stable curve we can associate a graph (with a vertex for each irreducible component and an edge for each node), and the gonality of the curve is then closely related to the gonality of the associated graph. This relation has been exploited by \cite{BakerNorine2007RR} and \cite{CoolsDraismaPayneRobeva2012} to prove a combinatorial Riemann-Roch Theorem, and to give a tropical proof of the Brill-Noether Theorem. 

It is clear that replacing a stable curve by its graph forgets a lot of information. One can hope by decorating the graph with some of that information to obtain a tighter relationship between the notions of gonality coming from algebraic geometry and those coming from combinatorics; for example, one could remember the genera of the vertices. In this brief note we explore the consequences of carrying across another kind of geometric data, namely assigning lengths to the edges of the graph. For 1-parameter degenerating families this is already well-explored; given a singularity with local equation $xy - t^n$ we assign the corresponding edge the length $n$. This leads to the classical theory of tropical curves as graphs with edge lengths in $\R_{>0}$, as treated for example in \cite{AminiCaporaso2011, Caporaso2014, CoolsDraisma2018}.

However, for many applications (in particular to moduli spaces of curves) it is natural to consider families of curves over higher-dimensional bases, for which there is no natural integer-valued edge length. Following \cite{Molcho2018} we instead equip our graphs with a length function taking values in a \emph{monoid}, which is the natural setting for a metric from the perspective of logarithmic geometry. Taking the monoid to be $\N$ recovers the classical integer-valued notion of edge-length, or we can consider the monoid to be $\R_{\ge 0}$ to recover (a finite model of) an abstract tropical curve%
\footnote{We warn the reader that the edges of our `monoid-metrised graphs' are discrete objects equipped with edge lengths, so there are no points on the interior of any edge. This differs from the usual notion of a `metric graph', where every edge is an interval containing uncountably many points.}.
However, for us the most interesting cases are where the monoid has higher rank (such as $\N^2$).

We define divisorial gonality on these `monoid-metrised' graphs in the natural way as the minimal degree of a divisor of positive rank. Then we extend the notion of morphisms to monoid-metrised graphs, and define harmonicity and nondegeneracy for these morphisms. We define geometric gonality of a monoid-metrised graph to be the minimal degree of a harmonic, non-degenerate morphism onto a tree.
Our first main result is the following.

\begin{theorem*} [Theorem \ref{t_gonggon}]
    Let $M$ be a sharp, integral monoid and $\G = (X,r,i,l)$ be an $M$-metrised graph. Then $\gon(\G) \leq \ggon(\G)$. 
\end{theorem*}

We will see that this result is an improvement over the classical inequality between geometric and divisorial gonality, at least when one is interested in using these results to bound from below the gonality of an algebraic curve. For such applications one is generally interested in the minimum of gonalities of subdivisions of the underlying combinatorial graph, and our second main result shows that this minimum is no greater than our `monoid-metrised gonality'. 

\begin{theorem*}[Theorem \ref{thm:combinatorial-dgon}]
	Let $\Gamma$ be a monoid-metrised graph, and let $G$ be the underlying graph.
	Then there is a subdivision $H$ of $G$ such that $\gon(\Gamma) \geq \gon(H)$.
\end{theorem*}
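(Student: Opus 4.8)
The plan is to start from an optimal divisorial gonality witness on the monoid-metrised graph $\Gamma$: a divisor $D$ of positive rank and of degree equal to $\gon(\Gamma)$. The key idea is that a monoid-metrised graph records, via its edge-length function $l$ taking values in $M$, strictly more information than a bare combinatorial graph, and that this extra information is precisely what lets us choose a good subdivision $H$ of the underlying graph $G$. First I would unwind what "positive rank" means for $D$: for every effective divisor $E$ of degree $1$ (i.e.\ every point of $\Gamma$), the divisor $D - E$ must be linearly equivalent to an effective divisor, witnessed by a chip-firing move, i.e.\ by a suitable piecewise-linear function on $\Gamma$ valued in $M$. The goal is to transport each of these finitely many (or suitably parametrised) chip-firing certificates down to an honest integer-valued chip-firing game on a single common subdivision $H$ of $G$.

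The main technical step is the construction of $H$. The natural approach is to look at all the "breakpoints" that the optimal $M$-valued rational functions and the divisor $D$ impose on the edges of $G$: on each edge $e$, the function witnessing $D - E \sim E'$ is piecewise-linear with slopes and breakpoints determined by elements of $M$, and $D$ may place chips in the interior of edges. I would collect all these breakpoints over all the relevant firing certificates, show that on each edge only finitely many distinct breakpoint positions occur (using integrality/sharpness of $M$ and finiteness of the combinatorial data), and subdivide each edge of $G$ at exactly those positions to obtain $H$. After this subdivision every chip of $D$ sits at a vertex of $H$, and every piecewise-linear firing function becomes \emph{affine on each edge} of $H$ with integer slopes.

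With $H$ in hand, the remaining step is to descend the gonality. I would push $D$ forward to a divisor $D_H$ on $H$ of the same degree, and argue that the $M$-valued firing certificates, now affine on each edge of $H$, induce ordinary $\Z$-valued chip-firing moves on $H$. Concretely, a linear map from $M$ to $\Z$ (or a comparison of slopes edge-by-edge) should convert each monoid-valued rational function into an integer-valued one that realises the required equivalence $D_H - P \sim (\text{effective})$ for every vertex $P$ of $H$, hence for every point after a further harmless subdivision. This would show $D_H$ has positive rank on $H$, giving $\gon(H) \leq \deg D_H = \deg D = \gon(\Gamma)$, which is the desired inequality.

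The hard part, I expect, is controlling the interaction between the monoid structure of $M$ and the combinatorics when proving that finitely many breakpoints suffice and that the descent of firing functions respects effectivity at \emph{every} point of $H$ simultaneously, not merely at the vertices. Because positive rank is a universally quantified condition over all degree-one effective divisors, one must ensure a \emph{single} subdivision $H$ works uniformly; I anticipate needing either a compactness/finiteness argument that the space of relevant certificates is governed by finitely many combinatorial types, or an explicit description of the worst-case breakpoints in terms of the values of $l$ and $D$ in $M$. Handling the non-total-orderedness of a higher-rank monoid here — making sense of "breakpoint" and "slope" when $M$ is only integral and sharp rather than ordered — is the most delicate point, and is likely where the argument will require the careful technical machinery of the paper's earlier sections on PL functions and morphisms.
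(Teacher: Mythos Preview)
Your proposal rests on a misconception about the objects involved. In this paper a monoid-metrised graph $\Gamma$ is \emph{discrete}: divisors are supported only on the finite vertex set $V(\Gamma)$, and a piecewise linear function $g\in\PL(\Gamma)$ is simply a map $V(\Gamma)\to M^{gp}$ satisfying a divisibility condition across each edge (Definition~\ref{d_PL}). There are no interior points on edges, no ``breakpoints'' of $g$ to collect, and $D$ cannot place chips in the interior of an edge. So the central step of your plan --- subdividing $G$ at breakpoints determined by the finitely many certificates $g_v$ --- has no content here: each $g_v$ is already affine on every edge of $G$, and there is nothing intrinsic to subdivide at.

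The idea you mention only in passing, ``a linear map from $M$ to $\Z$'', is in fact the whole engine of the paper's argument. One first reduces to $M$ finitely generated (replace $M$ by the submonoid generated by the edge lengths; Remark~\ref{rmk:injective-contraction}), then chooses a homomorphism $f\colon M\to\N$ with $f(m)>0$ for all nonzero $m$ (Lemma~\ref{lem:strictly-positive-functional}). Pushing the metric forward along $f$ gives an $\N$-metrised graph $\Gamma'$ on the same underlying $G$, and this does not increase divisorial gonality (Corollary~\ref{cor:contraction-dgon}). The subdivision $H$ is then determined by $f$ alone --- subdivide each edge $e$ into $f(l(e))$ unit edges --- not by $D$ or its witnesses. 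The worry you correctly flag, that the pushed-forward divisor must reach every vertex of $H$ including the new ones, is handled not by a uniform-breakpoint argument but by a rank-determining-set result: after removing loops (Proposition~\ref{prop:remove-loops}) it suffices that the divisor reach the original vertices of $\Gamma$ (Lemma~\ref{lem:rank-determining}, using \cite[Lem.~2.6]{VanDobbenDeBruynGijswijt2020}).
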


Example \ref{rem:infinite_gonality} shows that this inequality is in general strict, as $\gon(\Gamma)$ can be infinite. 

An immediate consequence is that two combinatorial lower bounds on the gonality (namely, \emph{treewidth} \cite{VanDobbenDeBruynGijswijt2020} and \emph{scramble number} \cite{HarpJacksonJensenSpeeter2022}) carry over to monoid-metrised graphs; see Corollary \ref{cor:tw-sn}.


Finally, in Appendix \ref{sec:log_appendix} we give a precise connection between the combinatorial and algebro-geometric gonalities via log geometry. We show that the degree of a flat morphism of log curves is equal to the degree of the associated harmonic morphism of monoid-metrised graphs, from which we deduce that the algebro-geometric gonality is greater than or equal to the geometric gonality of the associated monoid-metrised graph; see Theorem \ref{log_theorem} for a precise statement.
Using this, we show that monoid-metrised gonality gives a sharper bound on geometric gonality than was possible with previous approaches; see Remark \ref{log_eg}.

%

\subsection{Attribution}
Theorem \ref{t_gonggon} was proven by DvdV as part of his bachelor's thesis under the supervision of DH.
After this paper was posted to the arXiv, JvDdB explained to us how to prove Theorem \ref{thm:combinatorial-dgon}, and joined the paper as a co-author. 
The initial version of the text is due to DvdV, with edits by all of us.
Section \ref{sec:comb_appendix} is written by JvDdB, and the appendix is written by DH.

\section{Preliminaries} \label{c_prelim}



\subsection{Monoids}

This subsection is based on Chapter I.1 of \cite{Ogus}.

A \emp{monoid} is a set $M$ with a commutative associative binary operation and an identity element $0\in M$. We say $M$ is \emp{sharp} if the only invertible element is $0$. The \emp{groupification} $M \to M^{gp}$ is the initial object in the category of maps from $M$ to groups; we say $M$ is \emp{integral} if the groupificiation map is injective; equivalently if for every $a,b,c \in M$ we have $a+c = b+c \implies a = b$. 

From now on, by \emp{monoid} we always mean a sharp, integral monoid; key examples to keep in mind are $\N$, $\R_{\ge 0}$, $\N^2$. 

\begin{lemma} \label{l_divisibility}
Let $M$ be a sharp, integral monoid and $m, n \in M$. Suppose there exist positive integers $a$ and $b$ with $an = m$ and $bn = m$. Then $a=b$ or $m=n=0$.
\end{lemma}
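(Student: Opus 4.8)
The plan is to reduce the hypothesis $an = bn$ to a statement about a single element via cancellation, and then to invoke sharpness. First I would note that the roles of $a$ and $b$ are symmetric, so I may assume without loss of generality that $a \geq b$. Writing $k = a - b \geq 0$ (an integer, so $kn$ is a genuine element of $M$), I have $an = kn + bn$, while $m = bn = 0 + bn$. Since $M$ is integral, the cancellation law $x + c = y + c \implies x = y$ applies with $c = bn$, and comparing $an = bn$ in the form $kn + bn = 0 + bn$ yields $kn = 0$.

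From here the two cases of the conclusion fall out. If $a = b$, then we are done by the first alternative and there is nothing more to check. Otherwise $k = a - b \geq 1$, so I can write $kn = n + (k-1)n = 0$. This exhibits $(k-1)n$ as an additive inverse of $n$, so $n$ is invertible in $M$; because $M$ is sharp, its only invertible element is $0$, whence $n = 0$. Then $m = an = 0$ as well, placing us in the second alternative $m = n = 0$.

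The argument is entirely elementary, and I do not anticipate a serious obstacle; the only point requiring care is the bookkeeping in the cancellation step, where one must ensure the multiple $kn$ is a legitimate element of $M$ (valid since $k \geq 0$ is an integer and $M$ is closed under addition). The single genuinely structural input is the observation that a positive integer multiple of $n$ vanishing forces $n$ to be invertible, after which sharpness supplies the final equality. I expect this interplay — integrality providing cancellation and sharpness providing the vanishing — to be the conceptual crux, even though neither ingredient is technically demanding.
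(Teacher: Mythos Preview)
Your proof is correct and is the natural argument: cancel using integrality to get $(a-b)n = 0$, then use sharpness to conclude $n = 0$ when $a \neq b$. The paper in fact states this lemma without proof, so there is nothing to compare against; your write-up fills the gap cleanly.
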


In other words, if $an=m$, then $\frac{m}{n} = a$ is well-defined.


\subsection{Monoid-metrised graphs} \label{s_metricgraphs}
Here we present a purely combinatorial version of the tropical curves of \cite{Molcho2018}. Our graphs are connected, undirected, and are allowed self-loops and multiple edges. More formally we define

\begin{definition} \label{d_graph}
    A \emp{graph} is a tuple $G = (X,r,i)$ where
    \begin{itemize}
        \item $X$ is a finite set
        \item $r\: X \to X$ is a idempotent function
        \item $i\: X \to X$ is an involution
        \item $i(x) = x$ if and only if $r(x) = x$
    \end{itemize}
\end{definition}

The subset of $X$ of elements with $i(x) = x$ is denoted $V(G)$ or $V$ (\emp{vertices}). The other elements of $X$ are \emp{half-edges}, which form the set $H = X\setminus V$. We also write $E = \{\{e,i(e)\}: e\in H\}$ for the set of \emp{edges}. 
For $v\in V$, let $H_v = \{e\in H: r(e)=v\}$ be the set of half-edges \emp{adjacent} to $v$. For vertices $u,v\in V$, the set of edges between them is $E(u,v) = \big\{\{e,f\} \in E: \{r(e),r(f)\} = \{u,v\}\big\}$. 
We only consider connected graphs from now on.

\begin{definition} \label{d_length}\label{d_metricgraph}
    Let $M$ be a monoid and $G = (X,r,i)$ a graph. A \emp{metric on $\G$ with values in $M$} is a function $l\: X \to M$ such that
    \begin{itemize}
        \item $l(i(x)) = l(x)$ for all $x\in X$
        \item $l(x) = 0$ if and only if $x \in V(G)$
    \end{itemize}
\end{definition}
We say $\G$ is an \emp{$M$-metrised graph}, or that \emp{$\G$ is metrised by $M$}. 

\begin{remark}
We can simply view a metric over $M$ as a function assigning to every edge a non-zero element of $M$. In particular, if $M = \mathbb R_{\ge 0}$, then this is exactly (a finite model of) an abstract tropical curve. 
The more intricate formulation above via the set $X$ will be convenient when we begin to study morphisms of graphs. 
\end{remark}




\begin{definition}
    Let $\G$ be an ($M$-metrised) graph. A \emp{divisor} $D$ on $\G$ is an element $\sum_{v \in V(\G)} D(v) [v]$ of the free abelian group $\Div(\G)$ on $V(\G)$.
\end{definition}
We often think of $D$ as a function $V \to \mathbb Z$. 
The \emp{degree} of a divisor $D$ is $\deg(D) = \sum_{v\in V} D(v)$. The set of divisors on $\G$ with degree $k$ is denoted by $\Div^k(\G)$. 

We define a partial order $\geq$ on $\Div(\G)$, such that $D \geq D'$ if and only if for all $v\in V$ we have $D(v) \geq D'(v)$. A divisor $D$ is called \emp{effective} if $D\geq 0$. The set of effective divisors on $\G$ is denoted by $\Div_+(\G)$. Similarly, we define $\Div_+^k(\G) = \Div_+(\G) \cap \Div^k(\G)$, the set of effective divisors of degree $k$.

\begin{definition} \label{d_PL}
    Let $\G$ be an $M$-metrised graph. The set of \emp{piecewise linear functions} on $\G$ is
    $$ \PL(\G) = \{ g\: V(\G) \to M^{gp}: \forall e\in H: g(r(e))-g(r(i(e))) \in \langle l(e) \rangle \}; $$
\end{definition}
\noindent in other words, we require that the difference between the values of $g$ at the ends of an edge is an integer multiple of the length of that edge. 
Piecewise linear functions on $M$-metrised graphs can be seen as a discrete equivalent of \emph{rational} functions on algebraic varieties. 

Let $e\in H$, $u=r(e), v=r(i(e))$. If $g(u)-g(v) \in \langle l(e) \rangle$, then $g(u)-g(v)$ is an integer multiple of $l(e)$, and Lemma \ref{l_divisibility} ensures that $\frac {g(u) - g(v)} {l(e)}$ is well defined, and it is an integer. 

\begin{definition} \label{d_deltam}
    Let $\G$ be an $M$-metrised graph. The \emp{Laplacian} $\Delta$ is the group homomorphism
    $$ \Delta \: \PL(\G) \to \Div(\G) \: $$
    $$ g \to \sum_{v\in V}\left( \sum_{e\in H_v} \frac {g(v) - g(r(i(e)))} {l(e)} [v]\right) 
    $$
\end{definition}

\begin{lemma} \label{l_deltamker}
    The kernel of $\Delta$ is $\ker(\Delta) = \{g\in \PL(\G) : g \text{ is constant}\}$.
\end{lemma}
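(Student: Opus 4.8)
The easy inclusion is immediate: if $g$ is constant then $g(r(e)) - g(r(i(e))) = 0$ for every half-edge, so every summand defining $\Delta(g)$ vanishes and $g \in \ker(\Delta)$. The substance is the reverse inclusion, and my plan is to \emph{avoid} any appeal to a maximum principle — which is unavailable here, since the partial order on $M^{gp}$ induced by $M$ need not be total — by running an ``energy'' computation that lands inside the positive cone $M$, where sharpness does the work.

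First I would pair $g$ against $\Delta(g)$. For each half-edge $e$ set $n_e = \frac{g(r(e)) - g(r(i(e)))}{l(e)} \in \Z$, which is well defined by Lemma \ref{l_divisibility}; since $i$ is an involution and $l(i(e)) = l(e)$, one has $n_{i(e)} = -n_e$. I then form the element $S = \sum_{v \in V} \Delta(g)(v)\, g(v) \in M^{gp}$ (an integer times a group element, summed over $V$), expand $\Delta(g)(v) = \sum_{e \in H_v} n_e$, and reorganise the resulting sum over half-edges into a sum over edges. Pairing the contribution of $e$ with that of $i(e)$ gives $n_e\big(g(r(e)) - g(r(i(e)))\big) = n_e^2\, l(e)$, so that $S = \sum_{\{e,i(e)\} \in E} n_e^2\, l(e)$.

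Now suppose $\Delta(g) = 0$, so that $S = 0$. The crux is that every summand $n_e^2\, l(e)$ lies in $M$, being a nonnegative-integer multiple of $l(e) \in M$. Thus a finite sum of elements of $M$ vanishes in $M^{gp}$, hence already in $M$ since $M$ is integral. Because $M$ is sharp, such a sum can vanish only if each term does: if $a + b = 0$ in $M$ then $a$ is invertible, hence $a = 0$, and one iterates. Therefore $n_e^2\, l(e) = 0$ for every edge; as $l(e) \neq 0$ for a half-edge and a positive multiple of a nonzero element cannot vanish in a sharp monoid, this forces $n_e = 0$, i.e.\ $g(r(e)) = g(r(i(e)))$ across every edge.

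Finally, $g$ agrees on the two endpoints of every edge, hence is locally constant, and since $\G$ is connected it is constant. The only step demanding genuine care is the reorganisation of $S$ into the edge sum — tracking the involution $i$ and the identity $n_{i(e)} = -n_e$ — together with the observation that each term lands in $M$; once $S$ is confined to the positive cone, integrality and sharpness finish the argument mechanically.
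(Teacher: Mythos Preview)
The paper states this lemma without proof, so there is no argument to compare against directly. Your proof is correct and complete: the pairing $S = \sum_v \Delta(g)(v)\,g(v)$ rearranges to the Dirichlet energy $\sum_{\{e,i(e)\}\in E} n_e^2\,l(e)$, each summand lies in $M$, and integrality plus sharpness force every term to vanish once $S=0$; connectedness then gives constancy.

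Your observation that a maximum-principle argument is unavailable in this generality is exactly right and worth emphasising. Over $\N$ or $\R_{\ge 0}$ one would normally look at a vertex where $g$ is maximal and argue that all slopes there are nonnegative, hence zero; but for a monoid like $\N^2$ the induced partial order on $M^{gp}$ is not total, so no such extremum need exist. The energy computation sidesteps this entirely by landing the obstruction inside the positive cone, where sharpness alone suffices. The only minor point one might add for completeness is the one-line justification (which you do give) that $k\cdot m = 0$ in a sharp monoid with $k\ge 1$ forces $m=0$: write $m + (k-1)m = 0$, so $m$ is invertible, hence zero.
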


\begin{example} \label{e_PLconstant}
    \noindent Consider the following graph $\G$ over $\N^2$: 
    
    \begin{center}
        \begin {tikzpicture}[auto, node distance=3cm, every loop/.style={},
                        thick,main node/.style={circle,draw,font=\sffamily\bfseries}]
        \node[main node] (1) [] {$u$};
        \node[main node] (2) [right=of 1] {$v$};
        
        \path (1) edge [out=30, in=150] node {$(1,0)$} (2);
        \path (1) edge [out=330, in=210] node {$(0,1)$} (2);
        \end{tikzpicture} 
    \end{center}
    
    \noindent Then $g\in \PL(\G)$ would have to satisfy $g(v)-g(u) = r\cdot (1,0) = s\cdot (0,1)$ for some $r,s\in \mathbb Z$, and thus $r=s=0$ and $g(v) = g(u)$. Hence $\PL(\G)$ consists of all constant functions $V\to \mathbb N^2$, and $\Delta$ is the zero map here.
\end{example}

The divisors in the image of $\Delta$ are called \emp{principal divisors}. The corresponding subgroup is denoted by $\Prin(\G) \coloneqq \Delta[\PL(\G)] \subset \Div^0(\G)$.

The equivalence relation $\sim_\Delta$ on $\Div(\G)$ with $D\sim_\Delta D' \iff D-D'\in \Prin(\G)$ is called \emp{linear equivalence}.
The equivalence class of $D \in \Div(\G)$ under $\sim_\Delta$ is denoted with $[D]$. 
The \emp{linear system associated to $D$} is $\{E \in [D] : E \geq 0\}$.
The \emp{rank} or \emp{dimension} of $D$ is 
$$ \rnk(D) = \max \{ k \in \Z: \text{for all } F \in \Div_+^k(\G), |D-F| \neq \emptyset \} $$

\begin{definition} \label{d_divgon}
    Let $\G$ be an $M$-metrised graph. The \emp{divisorial gonality} of $\G$ is 
    $$ \gon(\G) = \min\{ \deg(D): D \in \Div(\G), r(D) \geq 1 \} $$
\end{definition}

\begin{remark}
    In the language of the Baker-Norine chip firing game, the divisorial gonality of $\G$ is the smallest degree of a \emph{winning} divisor.
\end{remark}
\vspace{11pt}


\subsection{Harmonic morphisms} \label{ss_harmmor} 

A morphism of $M$-metrised graphs should take vertices to vertices, and edges to edges or vertices, satisfying the obvious compatibilities. If an edge $e$ sent to an edge $e'$ then we require that the length of $e'$ is an integer multiple of the length of $e$. This is formalised in the following definition.

 
\begin{definition} \label{d_morpismm}
    Let $\G = (X,r,i,l)$ and $\G' = (X',r',i',l')$ be $M$-metrised graphs. A \emp{morphism} from $\G$ to $\G'$ is a map $\phi\: X \to X'$ such that 
    \begin{itemize}
        \item for all $v\in V(\G)$, we have $\phi(v) \in V(\G')$
        \item for all $e\in H(\G)$, if $\phi(e) = e'\in H(\G')$ then $\phi(r(e)) = r'(e'), \phi(r(i(e))) = r'(i'(e'))$ and $l'(e') \in \langle l(e) \rangle $
        \item for all $e\in H(\G)$, if $\phi(e) = v' \in V(\G')$ then $\phi(r(e)) = \phi(r(i(e))) = v'$
    \end{itemize}
\end{definition}

We denote a morphism with $\phi\: \G \to \G'$ rather than $\phi\: X(\G) \to X(\G')$.

For any half-edge $e\in H(\G)$ and $e' = \phi(e)$, let $\mu_\phi(e) = l'(e')/l(e)$ denote the \emp{slope} of $\phi$ on $e$. This is a positive integer by Lemma \ref{l_divisibility}. The slope of a half-edge $e$ is zero if and only if $\phi(e) \in V'$. 
For a vertex $v \in V(\G)$ and half-edge $e' \in H_{\phi(v)} \subset H(\G')$, the \emp{multiplicity} of $e' \in H(\G')$ at $v \in V(\G)$ is
$$ m_{\phi,v}(e') = \sum_{e\in H_v: \phi(e) = e'} \mu_\phi(e) $$

Suppose that for a vertex $v \in V$ any two half-edges $e',f'\in H_{\phi(v)}$ adjacent to $\phi(v)$ have the same multiplicity at $v$. We define the \emp{horizontal multiplicity} of $v$ under $\phi$ to be $m_\phi(v) = m_{\phi,v}(e')$ for any $e'\in H_{\phi(v)}$.

A morphism $\phi\: \G \to \G'$ is \emp{horizontally conformal} if the horizontal multiplicity is well defined for each vertex $v\in V$. 
A horizontally conformal morphism $\phi\: \G \to \G'$ is \emp{non-degenerate} if $m_\phi(v) > 0$ for all $v\in V$.

The \emp{multplicity} of a half-edge $e' \in H(\G')$ is 
$$ m_\phi(e') = \sum_{v \in V(\G): \phi(v)=r'(e')} m_{\phi,r(e)}(e').  $$

That is, the multiplicity of $e'$ is the sum of the multiplicities of $e'$ at $v$ for the vertices $v$ that are mapped onto the root $r'(e')$ of $e'$. It is clear that $m_\phi(e') = m_\phi(i(e'))$. Also, if $\phi$ is horizontally conformal, then for any $f'\in H_{r'(e')}$ we have
$$ m_\phi(e') = \sum_{v \in V(\G)~:~ \phi(v)=r'(e')} m_{\phi,r(e)}(e')
= \sum_{v \in V(\G)~:~ \phi(v)=r'(f')} m_{\phi,r(f)}(f') = m_\phi(f')
$$
Since $\G$ and $\G'$ are both connected, we have the same equality $m_\phi(e') = m_\phi(f')$ for any two half-edges $e',f'\in H(\G')$. 

\begin{definition}
    Let $\G,\G'$ be $M$-metrised graphs and $\phi\:\G \to \G'$ a horizontally conformal morphism. The \emp{degree} of $\phi$ is $\deg(\phi) = m_\phi(e')$ for any $e'\in H(\G')$.
\end{definition}

Horizontally conformal morphisms are often called `harmonic' (\cite{BakerNorine2007HM}), because they pull back harmonic functions to harmonic functions.
When $\G$ is simple (that is, $|E(u,v)| \leq 1$ for any $u,v \in V(\G)$), the converse also holds: a morphism that pulls back harmonic functions to harmonic functions is horizontally conformal. We follow Baker and Norine (\cite{BakerNorine2007HM}) and call a horizontally conformal morphism \emph{harmonic} from now on.

\subsection{Geometric gonality} \label{ss_ggon}
A (monoid-metrised) graph is called a \emp{tree} if it contains no cycles. 
\begin{definition} \label{d_ggon}
Let $\G$ be an $M$-metrised graph. The \emp{geometric gonality of $\G$} is 
$$ \ggon(\G) = $$
$$ \min \{\deg(\phi): T \text{ a tree, }\phi \: \G \to T \text{ a harmonic, non-degenerate morphism} \} $$ 
\end{definition}

\begin{remark}
    This definition is based on the notion of geometric gonality of combinatorial graphs in \cite{Aidun2018}. Alternatively, geometric gonality may be defined as the minimum degree of an \emph{indexed} harmonic morphism onto a tree (\cite[Def 2.1]{Caporaso2014} and \cite[Def. 1.3.1]{VanderWegen2017}). In that case, divisorial and (indexed) geometric gonality do not bound one another (\cite[Ex. 2.18,2.19]{Caporaso2014} and \cite[Ex. 2.1.3]{VanderWegen2017}). Another version of geometric gonality of $G$ is the minimal degree of a harmonic non-degenerate morphism between a \emph{refinement} of $\G$ and a tree \cite{CoolsDraisma2018}.
\end{remark}

\begin{remark}\label{rem:infinite_gonality}
    Not all $M$-metrised graphs allow a harmonic, non-degenerate morphism onto a tree. If the graph $\G$ of Example \ref{e_PLconstant} is mapped surjectively onto a tree $T$, then $|V(T)| \in \{1,2\}$. If $|V(T)| = 1$, then the morphism is degenerate. If $|V(T)| = 2$, then the two edges with lengths $(0,1)$ and $(1,0)$ have the same image $\{e,f\}$. If the map is a morphism, then $(1,0), (0,1) \in \<l(\{e,f\})\>$, which is not possible. Hence $\ggon(\G) = \infty$.
\end{remark}

Clearly the degree of a harmonic morphism is at least 1, and so is the geometric gonality of an $M$-metrised graph.

\begin{lemma}
    An $M$-metrised graph $\G$ has geometric gonality {\rm 1} if and only if $\G$ is a tree.
\end{lemma}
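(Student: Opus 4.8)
The plan is to prove both directions of the biconditional. The ``if'' direction is the substantive one; the ``only if'' direction should follow by a short structural argument.

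For the \emph{if} direction, suppose $\G$ is a tree. I would exhibit an explicit harmonic, non-degenerate morphism of degree $1$ onto a tree, the natural candidate being $\G$ itself together with the identity morphism $\phi = \operatorname{id}$. First I would check that the identity is a morphism in the sense of Definition \ref{d_morpismm}: it fixes vertices, sends each half-edge $e$ to itself with $l(e) \in \langle l(e)\rangle$, so the slope $\mu_\phi(e)$ equals $l(e)/l(e) = 1$ on every half-edge. Next I would verify horizontal conformality: for any vertex $v$ and any half-edge $e' \in H_v = H_{\phi(v)}$, the only half-edge $e \in H_v$ with $\phi(e) = e'$ is $e' $ itself (since $\G$ is simple enough that distinct half-edges at $v$ have distinct images under the identity), so $m_{\phi,v}(e') = \mu_\phi(e') = 1$ independently of $e'$. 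Hence the horizontal multiplicity $m_\phi(v) = 1$ is well defined at every vertex, so $\phi$ is horizontally conformal, and since $m_\phi(v) = 1 > 0$ it is non-degenerate. The degree is then $\deg(\phi) = m_\phi(e') = 1$ for any half-edge $e'$. Combined with the already-noted fact that $\ggon(\G) \geq 1$, this gives $\ggon(\G) = 1$.

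For the \emph{only if} direction, suppose $\ggon(\G) = 1$, so there is a harmonic, non-degenerate morphism $\phi \: \G \to T$ of degree $1$ onto a tree $T$. I want to conclude that $\G$ is itself a tree, i.e.\ contains no cycles. The idea is that a degree-$1$ morphism is essentially an isomorphism onto its image, so $\G$ inherits the acyclicity of $T$. Concretely, since $\deg(\phi) = m_\phi(e') = 1$ for every half-edge $e'$ of $T$, and $m_\phi(e')$ is a sum over preimage vertices of nonnegative multiplicities $m_{\phi,v}(e')$ each of which is a sum of positive slopes, every edge $e'$ of $T$ has exactly one half-edge in $\G$ mapping to it, with slope $1$; moreover non-degeneracy forces each vertex of $\G$ to have horizontal multiplicity $1$, meaning each vertex of $\G$ maps to a vertex of $T$ covering its incident edges exactly once. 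From this I would argue that $\phi$ restricts to a bijection on edges and cannot collapse any edge to a vertex without violating non-degeneracy or the degree count, so $\phi$ is injective on edges and the edge-count and incidence structure of $\G$ matches that of a subgraph of $T$.

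The main obstacle will be the \emph{only if} direction, specifically ruling out the possibility that $\G$ has a cycle even though $T$ does not. The key point to nail down is that a non-degenerate harmonic morphism of degree $1$ cannot have any half-edge of $\G$ contracted to a vertex of $T$ (such contraction, combined with degree $1$ and non-degeneracy, would force a contradiction with the horizontal multiplicity being a positive constant across all half-edges at an image vertex), and cannot identify two distinct edges of $\G$ over a single edge of $T$ (as that would push the multiplicity of that edge above $1$). Once contraction and identification are excluded, $\phi$ embeds the edge set of $\G$ into that of $T$ compatibly with incidences, and a cycle in $\G$ would map to a cycle in $T$, contradicting that $T$ is a tree. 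I would therefore present this direction by first establishing ``no contraction'' and ``injectivity on edges'' as the two technical claims, then deducing acyclicity. The remaining verifications are routine once these claims are in place.
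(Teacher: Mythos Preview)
The paper states this lemma without proof, so there is no argument in the paper to compare against; I assess your plan on its own merits.

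Your ``if'' direction is correct and complete: the identity on $\Gamma$ is a harmonic non-degenerate morphism of degree $1$ onto a tree, exactly as you describe.

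Your ``only if'' direction has a genuine gap at the ``no contraction'' step. You assert that a degree-$1$ non-degenerate harmonic morphism cannot contract any half-edge of $\Gamma$ to a vertex of $T$, but the justification you sketch (``contradiction with the horizontal multiplicity being a positive constant'') only goes through for edges that are \emph{not} loops. If $e$ is a non-loop edge contracted to $v'$, then $r(e)\neq r(i(e))$ both land in the fibre over $v'$, and summing their horizontal multiplicities already gives $\deg\phi\geq 2$, a contradiction. But if $e$ is a loop at $u$, then $r(e)=r(i(e))=u$ contributes only one vertex to the fibre, the contracted half-edges of the loop do not appear in any sum $m_{\phi,u}(e')$ for $e'\in H_{\phi(u)}$, and no contradiction arises. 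Concretely: let $\Gamma$ consist of two vertices $u,v$ joined by one edge of length $\ell$, together with a loop at $u$ of arbitrary length; let $T$ be the tree on $\{u',v'\}$ with one edge of length $\ell$. Sending $u\mapsto u'$, $v\mapsto v'$, the edge to the edge with slope $1$, and both half-edges of the loop to $u'$, gives a harmonic non-degenerate morphism of degree $1$ in the sense of the paper's definitions, yet $\Gamma$ is not a tree. So either the lemma tacitly assumes $\Gamma$ is loopless, or your ``no contraction'' claim needs an additional argument specific to loops that your outline does not supply. Your remaining claims (that $\phi$ is injective on vertices, that non-loop edges cannot be contracted, and that $\phi$ is injective on non-contracted edges) are correct and follow as you indicate.
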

\vspace{11pt}


\section{Geometric gonality of a monoid-metrised graph}\label{c_gonggon}

The first main theorem of this note is the following inequality between divisorial gonality and geometric gonality. This is a generalisation of the known inequality for combinatorial graphs.

\begin{theorem} \label{t_gonggon}
    Let $M$ be a sharp, integral monoid and $\G = (X,r,i,l)$ be an $M$-metrised graph. Then $\gon(\G) \leq \ggon(\G)$. 
\end{theorem}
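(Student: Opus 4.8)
The plan is to prove $\gon(\Gamma) \leq \ggon(\Gamma)$ by starting from a harmonic, non-degenerate morphism $\phi \colon \Gamma \to T$ onto a tree $T$ realizing the geometric gonality, and producing from it a divisor $D$ on $\Gamma$ of positive rank whose degree is exactly $\deg(\phi) = \ggon(\Gamma)$. The natural candidate is the fibre divisor over a chosen vertex $t_0 \in V(T)$: set
\[
D = \sum_{v \in V(\Gamma) : \phi(v) = t_0} m_\phi(v)\,[v].
\]
By non-degeneracy each coefficient $m_\phi(v)$ is a positive integer, so $D$ is effective, and I would first check that $\deg(D) = \deg(\phi)$. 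This is the familiar fact that the total horizontal multiplicity over any vertex of the target equals the degree: summing $m_\phi(v)$ over the fibre of $t_0$ should reproduce the common value $m_\phi(e')$ appearing in the definition of $\deg(\phi)$, using the edge-multiplicity identities established just before the definition of degree.

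Next I would show $\rnk(D) \geq 1$, which by definition means that for every vertex $w \in V(\Gamma)$ the divisor $D - [w]$ is linearly equivalent to an effective divisor. The idea is to move the full fibre $D$ from $\phi^{-1}(t_0)$ along $T$ to the fibre over $\phi(w)$, so that the resulting effective divisor has positive coefficient at $w$; subtracting $[w]$ then stays effective. Concretely, I would construct a piecewise linear function $g \in \PL(\Gamma)$ by pulling back a suitable PL function on the tree $T$: since $T$ is a tree, for any target vertex $t_1$ there is a PL function $h$ on $T$ whose Laplacian is $\deg(\phi)\big([t_1] - [t_0]\big)$ — on a tree one can build such an $h$ explicitly by assigning values that increase by the appropriate edge-length multiple along the unique path from $t_0$ to $t_1$. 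Pulling $h$ back along $\phi$ (composing $h$ with $\phi$ on vertices) should give a PL function on $\Gamma$, and the key computation is that
\[
\Delta(h \circ \phi) = \phi^* \Delta_T(h),
\]
i.e. the Laplacian of the pullback is the pullback of the Laplacian, with the horizontal multiplicities $m_\phi(v)$ appearing as the weights. This identity is exactly where harmonicity (horizontal conformality) is used: the slope factor $\mu_\phi(e)$ combines with the ratio $l'(e')/l(e)$ so that the contributions at each vertex collect into $m_\phi(v)$ times the target Laplacian. Here one must be careful that pullback is well defined — that $h \circ \phi$ genuinely lands in $\PL(\Gamma)$ — which follows from the length-divisibility condition $l'(e') \in \langle l(e)\rangle$ in the definition of a morphism, together with Lemma~\ref{l_divisibility}.

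Granting this pullback-Laplacian formula, applying it with $t_1 = \phi(w)$ yields $\Delta(h \circ \phi) = D' - D$ for an effective $D'$ supported on $\phi^{-1}(\phi(w))$ with $D' \geq m_\phi(w)[w] \geq [w]$; hence $D \sim_\Delta D' \geq [w]$ and $D - [w] \sim_\Delta D' - [w] \geq 0$. Since $w$ was arbitrary this gives $\rnk(D) \geq 1$, so $\gon(\Gamma) \leq \deg(D) = \ggon(\Gamma)$.

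I expect the main obstacle to be establishing the pullback-Laplacian identity cleanly in the monoid-metrised setting, rather than over $\R_{\geq 0}$ where slopes are real numbers. The bookkeeping of the integer slopes $\mu_\phi(e)$, the monoid-valued edge lengths, and the well-definedness of the various quotients (each justified by Lemma~\ref{l_divisibility}) is the delicate part; in particular one must verify that edges of $\Gamma$ contracted by $\phi$ (those with $\phi(e) \in V'$) contribute zero to both sides, and that horizontal conformality is precisely the hypothesis making the per-vertex weight $m_\phi(v)$ well defined so that it can be factored out. Once the identity is in place, the rest is the standard tree-gonality argument: a tree admits enough principal divisors to move any fibre to any other fibre, which is what forces the fibre divisor to have rank at least one.
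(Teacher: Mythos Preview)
Your approach is essentially the paper's: both take the fibre divisor $D = \phi^*([t_0])$ and use the pullback--Laplacian identity $\Delta(h\circ\phi) = \phi^*\Delta_T(h)$ (the paper's Proposition~\ref{p_phi^*}) to show $D$ has positive rank. The paper packages the last step more abstractly, proving a general rank inequality $r(\phi^*D') \geq r(D')$ via the pushforward $\phi_*$ and the inequality $\phi^*\phi_*(F) \geq F$, and then applying it to $D' = [t_0]$ (which has rank~$1$ on a tree); your direct construction of an $h$ on $T$ for each target $t_1 = \phi(w)$ is simply that argument unpacked in the special case needed.

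Two small slips to fix. First, the factor $\deg(\phi)$ in ``whose Laplacian is $\deg(\phi)([t_1]-[t_0])$'' should not be there: you want $\Delta_T(h) = [t_1]-[t_0]$, and the multiplicities $m_\phi(v)$ enter only when you apply $\phi^*$, yielding $\Delta(h\circ\phi) = \phi^*[t_1] - \phi^*[t_0] = D' - D$ as you correctly write afterwards. Second, your argument tacitly assumes $\ggon(\Gamma) < \infty$; the paper handles $\ggon(\Gamma) = \infty$ in one line by observing $\gon(\Gamma) \leq |V(\Gamma)|$.
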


The inequality comes down to the following. Given a harmonic, non-degenerate morphism of an $M$-metrised graph $\G$ onto an $M$-metrised tree $T$, we can construct a divisor of positive rank on $\G$. For this, we need to define a pullback map of divisors, a pullback map of piecewise linear functions, and a pushforward map of divisors, corresponding to a morphism $\phi$.

\begin{definition}
    Let $\phi\: \G \to \G'$ be a harmonic morphism of $M$-metrised graphs. Then the \emp{pullback map} of divisors is the group homomorphism 
    $$ \phi^* \: \Div(\G') \to \Div(\G) \: 
    D' \mapsto \sum_{v \in V(\G)}D'(\phi(v)) m_\phi(v)[v]. $$
\end{definition}
It can be verified that $\deg(\phi^*(D')) = \deg(\phi) \cdot \deg(D')$, and that $\phi^*(D')$ is effective if and only if $D'$ is effective.


\begin{lemma}
    Let $\phi\: \G \to \G'$ be a harmonic morphism of $M$-metrised graphs, and let $g$ be a piecewise linear function on $\G'$. Then $g \circ \phi$ is a piecewise linear function on $\G$. 
\end{lemma}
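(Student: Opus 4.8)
The plan is to verify the defining condition of $\PL(\G)$ directly for the composite $g \circ \phi$, proceeding half-edge by half-edge and distinguishing according to whether $\phi$ contracts the edge or not. Since a morphism sends vertices to vertices, $g \circ \phi$ is a well-defined function $V(\G) \to M^{gp}$, so only the local condition at each half-edge remains to be checked.

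Fix a half-edge $e \in H(\G)$ and write $u = r(e)$, $v = r(i(e))$; I must show that $(g \circ \phi)(u) - (g \circ \phi)(v) \in \<l(e)\>$. The definition of a morphism leaves two cases. If $\phi(e) = e' \in H(\G')$, then the morphism axioms give $\phi(u) = r'(e')$ and $\phi(v) = r'(i'(e'))$, together with $l'(e') \in \<l(e)\>$. Because $g$ is piecewise linear on $\G'$, the difference $g(r'(e')) - g(r'(i'(e')))$ is an integer multiple of $l'(e')$; and $l'(e')$ is itself the integer multiple $\mu_\phi(e) \cdot l(e)$ of $l(e)$. Composing these two divisibilities shows that the required difference is an integer multiple of $l(e)$, hence lies in $\<l(e)\>$.

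In the remaining case $\phi(e) = w' \in V(\G')$ is a vertex, and the morphism axioms force $\phi(u) = \phi(v) = w'$, so that $(g \circ \phi)(u) - (g \circ \phi)(v) = 0 \in \<l(e)\>$ trivially. Since $e$ was arbitrary, $g \circ \phi \in \PL(\G)$.

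I do not expect any real obstacle here: the entire content is carried by the morphism axiom $l'(e') \in \<l(e)\>$, which is precisely the compatibility needed to transport the ``integer multiple of the edge length'' condition from $\G'$ back to $\G$. The only points requiring a little care are invoking Lemma \ref{l_divisibility} (equivalently, the well-definedness of the slope $\mu_\phi$) to guarantee that the composed multiple is again a genuine integer multiple of $l(e)$, and treating the edge-contracting case separately rather than trying to subsume it into the first case.
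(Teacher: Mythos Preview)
Your proof is correct and follows essentially the same approach as the paper's: both verify the defining condition of $\PL(\G)$ half-edge by half-edge, splitting into the contracted case (difference is zero) and the non-contracted case (use $\langle l'(e')\rangle \subset \langle l(e)\rangle$). The only cosmetic difference is that the paper phrases the case split as ``$\phi(u)=\phi(v)$ or not'' rather than ``$\phi(e)\in V(\G')$ or $\phi(e)\in H(\G')$'', and your closing remark about needing Lemma~\ref{l_divisibility} is unnecessary here, since the inclusion $\langle l'(e')\rangle \subset \langle l(e)\rangle$ already gives the required integer multiple directly.
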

\begin{proof}
    Clearly $g \circ \phi$ is a map from $V$ to $M^{gp}$. 
    Let $g\in \PL(\G')$, and $e\in H$ an half-edge, and $u=r(e)$ and $v=r(i(e))$ the corresponding vertices. Then either $\phi(u)=\phi(v)$ and thus $g(\phi(u)) - g(\phi(v)) = 0 \in \<l(e)\>$, or $\phi(u),\phi(v)$ are connected by an edge $\{e',i'(e')\}$, and we have $g(\phi(u)) - g(\phi(v)) \in \<l'(e')\> \subset \<l(e)\>$. Therefore $g(\phi(r(e))) - g(\phi(r(i(e)))) \in \<l'(e')\> \subset \<l(e)\>$ for any half-edge $e$, and therefore $g\circ \phi \in \PL(\G)$.
\end{proof}

\begin{definition}
    Let $\phi\: \G \to \G'$ be a harmonic, non-degenerate morphism of $M$-metrised graphs. Then the \emp{pullback map} of piecewise linear functions is the group homomorphism 
    $$ \phi^* \: \PL(\G') \to \PL(\G) \: g \mapsto g \circ \phi $$
\end{definition}

Although both pullback maps are denoted with $\phi^*$, it should be clear from the context which one is used. The next lemma shows that pulling back commutes with the Laplacians $\Delta$ and $\Delta'$; the proof is a lengthy verification, and is omitted.

\begin{proposition} \label{p_phi^*}
    Let $\phi\:\G \to \G'$ be a harmonic, non-degenerate morphism of $M$-metrised graphs, and $\Delta, \Delta'$ be the Laplacians corresponding to $\G$ and $\G'$. Then $\phi^* \circ \Delta' = \Delta \circ \phi^*$. 
\end{proposition}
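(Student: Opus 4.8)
The plan is to verify the identity pointwise. Since $\phi^* \circ \Delta'$ and $\Delta \circ \phi^*$ are both group homomorphisms $\PL(\G') \to \Div(\G)$, it suffices to fix $g \in \PL(\G')$ and compare the coefficient of $[v]$ on each side for every $v \in V(\G)$. By the definition of the pullback of divisors, the coefficient of $[v]$ in $\phi^*(\Delta'(g))$ is
$$ m_\phi(v) \cdot \Delta'(g)(\phi(v)) = m_\phi(v) \sum_{e' \in H_{\phi(v)}} \frac{g(\phi(v)) - g(r'(i'(e')))}{l'(e')}, $$
whereas the coefficient of $[v]$ in $\Delta(\phi^*(g)) = \Delta(g \circ \phi)$ is
$$ \sum_{e \in H_v} \frac{g(\phi(v)) - g(\phi(r(i(e))))}{l(e)}. $$
The entire proof then reduces to identifying these two expressions.

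First I would partition $H_v$ according to the two cases of Definition \ref{d_morpismm}. If $\phi(e) \in V(\G')$, the third bullet gives $\phi(r(i(e))) = \phi(r(e)) = \phi(v)$, so the summand $g(\phi(v)) - g(\phi(r(i(e))))$ vanishes and contributes nothing. For the remaining half-edges $\phi(e) = e' \in H(\G')$, the second bullet gives $r'(e') = \phi(v)$ (hence $e' \in H_{\phi(v)}$) and $\phi(r(i(e))) = r'(i'(e'))$. Since $g \in \PL(\G')$, the quantity $c_{e'} \coloneqq \frac{g(\phi(v)) - g(r'(i'(e')))}{l'(e')}$ is a well-defined integer by Lemma \ref{l_divisibility}, and using the slope relation $l'(e') = \mu_\phi(e)\, l(e)$ the summand rewrites as
$$ \frac{g(\phi(v)) - g(r'(i'(e')))}{l(e)} = c_{e'} \cdot \frac{l'(e')}{l(e)} = c_{e'}\, \mu_\phi(e). $$

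Next I would regroup the surviving sum by image half-edge. Collecting together all $e \in H_v$ with $\phi(e) = e'$ turns the right-hand coefficient into
$$ \sum_{e' \in H_{\phi(v)}} c_{e'} \sum_{e \in H_v : \phi(e) = e'} \mu_\phi(e) = \sum_{e' \in H_{\phi(v)}} c_{e'}\, m_{\phi,v}(e'), $$
by the very definition of $m_{\phi,v}(e')$. Here horizontal conformality enters decisively: because $\phi$ is harmonic, $m_{\phi,v}(e') = m_\phi(v)$ is independent of $e' \in H_{\phi(v)}$, so it factors out and leaves $m_\phi(v) \sum_{e'} c_{e'}$. Recalling that $c_{e'} = \frac{g(\phi(v)) - g(r'(i'(e')))}{l'(e')}$, this equals $m_\phi(v)\, \Delta'(g)(\phi(v))$, which is exactly the coefficient computed from the left-hand side.

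I expect the only real obstacle to be bookkeeping rather than mathematics: one must keep the half-edges $e$ of $\G$ cleanly separated from their images $e'$ in $\G'$, check via Lemma \ref{l_divisibility} that each quotient being formed is a legitimate integer, and confirm that the slope-zero half-edges drop out. The conceptual heart is the single step where horizontal conformality lets one pull the common multiplicity $m_\phi(v)$ out of the sum; this is precisely the hypothesis that makes the pullback of divisors, which is weighted by $m_\phi(v)$, compatible with the edge-by-edge pullback of piecewise linear functions.
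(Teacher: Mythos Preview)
Your proof is correct. The paper itself omits this proof entirely, calling it ``a lengthy verification'', so there is no alternative argument to compare against; the pointwise computation you carry out is exactly the kind of verification the authors presumably had in mind. Every step checks: half-edges contracted to vertices contribute zero, the slope relation $l'(e') = \mu_\phi(e)\,l(e)$ converts denominators, regrouping by image half-edge produces $m_{\phi,v}(e')$, and horizontal conformality allows you to factor out the common value $m_\phi(v)$.

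One minor observation: your argument never uses the non-degeneracy hypothesis. Horizontal conformality alone suffices for the identity $\phi^* \circ \Delta' = \Delta \circ \phi^*$; non-degeneracy is included in the statement only because the paper has chosen to define the pullback $\phi^*$ on $\PL$ under that standing hypothesis, and it becomes essential later (e.g.\ in showing $\phi^*(\phi_*(F)) \geq F$). You could note this in passing, but it does not affect the validity of your proof.
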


In other words, the following diagram commutes:


\begin{center}
\begin{tikzcd}
\operatorname{PL}(\Gamma') \arrow[dd, "\Delta'"] \arrow[rr, "\varphi^*"] &  & \operatorname{PL}(\Gamma) \arrow[dd, "\Delta"] \\
   &  &     \\
\operatorname{Div}(\Gamma') \arrow[rr, "\varphi^*"] &  & \operatorname{Div}(\Gamma)                    
\end{tikzcd}

\end{center}

Given a harmonic, non-degenerate morphism $\phi\: \G \to \G'$, and a divisor $D'\in \Div(\G')$, the corresponding divisor $\phi^*(D)$ is of most interest. In Lemma \ref{l_phirank}, we prove that $\phi^*$ does not decrease the rank of a divisor, and we use the \emph{pushforward} map for the proof.

\begin{definition}
    Let $\phi\:\G \to \G'$ be a harmonic morphism of $M$-metrised graphs. The \emp{pushforward} map on divisors is the group homomorphism
    $$ \phi_*\: \Div(\G) \to \Div(\G') \: 
    D \mapsto \sum_{v\in V(\G)} D(v) [\phi(v)].  $$
\end{definition}

It is clear that $\deg(\phi_*(D)) = \deg(D)$. We also recall that $\deg(\phi^*(D')) = \deg(\phi) \cdot \deg(D')$ for $D'\in \Div(\G')$. 

\begin{lemma}
    Let $\phi\:\G \to \G'$ be a harmonic, non-degenerate morphism of $M$-metrised graphs, and $F \in \Div_+(\G)$. Then $\phi^*(\phi_*(F)) \geq F$.
\end{lemma}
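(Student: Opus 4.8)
The plan is to compute the coefficient of $[v]$ in $\phi^*(\phi_*(F))$ for each vertex $v\in V(\G)$ by unwinding the two definitions in turn, and then to bound this coefficient below by $F(v)$ using effectivity of $F$ together with non-degeneracy of $\phi$.

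First I would write down the pushforward explicitly: viewing divisors as functions, the value of $\phi_*(F)$ at a vertex $w\in V(\G')$ is $\phi_*(F)(w) = \sum_{u\in V(\G):\,\phi(u)=w} F(u)$, i.e.\ the sum of $F$ over the fibre of $w$. Feeding this into the definition of the pullback $\phi^*(D') = \sum_{v}D'(\phi(v))\,m_\phi(v)[v]$ with $D' = \phi_*(F)$, the coefficient of $[v]$ in $\phi^*(\phi_*(F))$ is $\phi_*(F)(\phi(v))\cdot m_\phi(v) = \Big(\sum_{u:\,\phi(u)=\phi(v)} F(u)\Big)\, m_\phi(v)$.

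The key observation is that $v$ itself lies in the fibre $\{u:\phi(u)=\phi(v)\}$, so the inner sum genuinely contains the term $F(v)$. Since $F$ is effective, every other term $F(u)$ in that sum is non-negative, whence $\sum_{u:\,\phi(u)=\phi(v)} F(u) \geq F(v)$. Because $\phi$ is non-degenerate, $m_\phi(v)$ is a positive integer, so $m_\phi(v)\geq 1$ and multiplying preserves the inequality. Combining these, the coefficient of $[v]$ is at least $F(v)\cdot m_\phi(v)\geq F(v)$, which is exactly the claimed inequality $\phi^*(\phi_*(F))\geq F$ read off at the vertex $v$; as $v$ was arbitrary, this proves the lemma.

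There is no substantial obstacle here beyond careful bookkeeping; the argument is essentially complete once the composite is written out. The only two points that must not be overlooked are that $v$ lies in its own fibre (so that $F(v)$ really appears, rather than being dropped), and that it is precisely \emph{non-degeneracy} which supplies $m_\phi(v)\geq 1$ rather than merely $m_\phi(v)\geq 0$: without it the inequality could fail at a vertex with vanishing horizontal multiplicity.
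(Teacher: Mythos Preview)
Your proof is correct and follows essentially the same route as the paper: both unwind $\phi^*(\phi_*(F))$ to obtain the coefficient $\big(\sum_{u:\,\phi(u)=\phi(v)} F(u)\big)\,m_\phi(v)$ at $v$, then apply effectivity of $F$ to bound the sum below by $F(v)$ and non-degeneracy to bound $m_\phi(v)\geq 1$. The paper presents this as a chain of displayed inequalities rather than a vertex-by-vertex argument, but the content is identical.
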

\begin{proof}
    We have
    $$ \phi^*(\phi_*(F)) = \phi^* \left( \sum_{v\in V(\G)} F(v) [\phi(v)] \right) 
    = \phi^* \left( \sum_{v'\in V(\G')} \left( \sum_{u: \phi(u)=v'} F(u) \right) [v'] \right)$$
    $$ \sum_{v\in V(\G)} \left( \sum_{u: \phi(u)=\phi(v)} F(u) \right) m_\phi(v) [v]
    \geq \sum_{v\in V(\G)} F(v) m_\phi(v) [v] $$
    $$ \geq \sum_{v\in V(\G)} F(v) [v]
    = F $$
    For the first inequality, we used that $F(u) \geq 0$ for all $u \in V(\G)$. For the second inequality, we used that $m_\phi(v) \geq 1$ for all $v \in V(\G)$ as $\phi$ is non-degenerate.
\end{proof}

We can now prove that the pullback of divisors does not decrease the rank of an effective divisor. This means that pulling back an effective divisor of positive rank and degree $d$, gives an effective divisor of positive rank and degree $\deg(\phi) \cdot d$.

\begin{lemma} \label{l_phirank}
    Let $\phi\:\G \to \G'$ be a harmonic, non-degenerate morphism of $M$-metrised graphs. Then for all $D'\in \Div_+(\G')$ we have $\rnk(\phi^*(D')) \geq \rnk(D')$.
\end{lemma}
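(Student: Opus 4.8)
The plan is to work directly from the definition of rank. Write $r=\rnk(D')$, and let $F\in\Div_+^r(\G)$ be arbitrary; the goal is to show $|\phi^*(D')-F|\neq\emptyset$, since establishing this for every such $F$ gives $\rnk(\phi^*(D'))\geq r$. The idea is a standard push--pull maneuver: transport the ``test divisor'' $F$ from $\G$ to $\G'$ by pushing forward, apply the rank hypothesis for $D'$ there, and then pull the resulting effective divisor back to $\G$.

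First I would push $F$ forward. Since $\phi_*$ preserves effectivity and degree, $\phi_*(F)\in\Div_+^r(\G')$. As $\deg(\phi_*(F))=r\leq\rnk(D')$, the definition of rank yields $|D'-\phi_*(F)|\neq\emptyset$, so there is an effective $E'\in\Div_+(\G')$ with $E'\sim_\Delta D'-\phi_*(F)$; equivalently $E'=D'-\phi_*(F)+\Delta'(g')$ for some $g'\in\PL(\G')$.

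Next I would pull back and use Proposition \ref{p_phi^*}. Applying $\phi^*$ and invoking $\phi^*\circ\Delta'=\Delta\circ\phi^*$ gives
$$ \phi^*(E') = \phi^*(D') - \phi^*(\phi_*(F)) + \Delta(\phi^*(g')), $$
so that $\phi^*(E')\sim_\Delta \phi^*(D')-\phi^*(\phi_*(F))$, where $\phi^*(E')$ is effective because $E'$ is. Rearranging and adding $\phi^*(\phi_*(F))-F$ to both sides,
$$ \phi^*(D') - F \;\sim_\Delta\; \phi^*(E') + \bigl(\phi^*(\phi_*(F)) - F\bigr). $$
The right-hand side is effective: $\phi^*(E')\geq 0$, and $\phi^*(\phi_*(F))-F\geq 0$ by the preceding lemma (using non-degeneracy of $\phi$). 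Hence $|\phi^*(D')-F|\neq\emptyset$, and since $F$ was an arbitrary effective divisor of degree $r$ on $\G$, we conclude $\rnk(\phi^*(D'))\geq r=\rnk(D')$.

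The routine ingredients (pullback and pushforward preserving effectivity and behaving correctly on degrees) are already recorded, so the only substantive input is that $\phi^*$ respects linear equivalence, which is exactly the content of the commuting square in Proposition \ref{p_phi^*}. The main conceptual step, rather than an obstacle, is recognising that one must push the test divisor $F$ forward to $\G'$ in order to bring the rank hypothesis on $D'$ to bear, and then control the defect $\phi^*(\phi_*(F))-F$ via non-degeneracy; once this strategy is in place, the verification is a short chain of (in)equalities.
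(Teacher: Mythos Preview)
Your proof is correct and follows essentially the same push--pull strategy as the paper: push the test divisor forward, use the rank hypothesis on $\G'$, pull the witness back via Proposition~\ref{p_phi^*}, and control the defect $\phi^*(\phi_*(F))-F$ with the preceding lemma. The only cosmetic difference is that the paper phrases the rank condition as ``there exists $E'\in|D'|$ with $E'\geq \phi_*(F)$'' and then pulls back $E'$ directly, whereas you take an effective $E'\in|D'-\phi_*(F)|$; these are the same data up to the substitution $E'_{\text{paper}} = E'_{\text{yours}} + \phi_*(F)$.
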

\begin{proof}
    For any divisor $D \in \Div(\G)$ we have
    $$ r(D) = \max\{ k\in \Z: \forall F\in \Div_+^k(\G): |D-F| \neq \emptyset \} $$
    $$ = \max\{ k\in \Z: \forall F\in \Div_+^k(\G): \exists E \in |D|: E \geq F \} $$
    Let $D'\in \Div_+(\G')$, and $D = \phi^*(D') \in \Div_+(\G)$, and $k = r(D') \geq 0$. 
    We will show that, for any $F'' \in \Div_+^k(\G)$, there is an $E \in |D|$ such that $E \geq F$, which implies $r(D) \geq k = r(D')$. The relations between all divisors involved are summarized in the figure below.

    For any $F'' \in \Div_+^k(\G)$ let $F'=\phi_*(F'') \in \Div_+^k(\G')$. Then there is an $E'\in |D'|$ with $E'\geq F'$ since $r(D') = k$. It is clear that $E = \phi^*(E')$ is effective because $E'$ is effective, and $E'-D' \in \Prin(\G')$ implies $\phi^*(E'-D') = E-D \in \Prin(\G)$ by Proposition \ref{p_phi^*}, so $E \in |D|$. We also have $E'-F' \geq 0$ so $E \geq F = \phi^*(\phi_*(F'')) \geq F''$. 
    
    Hence for all $F''\in \Div_+^{r(D')}(\G)$ there is a $E \in |\phi^*(D')|$ with $E \geq F''$, and thus $r(\phi^*(D')) \geq r(D')$.

    \begin{tikzcd}
    & 0 \arrow[r, white, "{\color{black}\leq}" description]
    & F' \arrow[d, "\phi^*"] \arrow[r, white, "{\color{black}\leq}" description] 
    & E' \arrow[d, "\phi^*"] \arrow[r, white, "{\color{black}\sim}" description] 
    & D' \arrow[d, "\phi^*" ] 
    \\
    0 \arrow[r, white, "{\color{black}\leq}" description] \arrow[ru, leftrightarrow]
    & F'' \arrow[ru, "\phi_*"] \arrow[r, white, "{\color{black}\leq}" description] 
    & F \arrow[r, white, "{\color{black}\leq}" description]            
    & E \arrow[r, white, "{\color{black}\sim}" description]         & D           
    \end{tikzcd}

\end{proof}

Theorem \ref{t_gonggon} follows immediately.

\begin{theorem*} [Theorem \ref{t_gonggon}]
    Let $M$ be a sharp, integral monoid and $\G = (X,r,i,l)$ be an $M$-metrised graph. Then $\gon(\G) \leq \ggon(\G)$. 
\end{theorem*}
\begin{proof}
    If $\ggon(\G) = \infty$, then the inequality holds since $\gon(\G) \leq |V(\G)| < \infty$.
    Otherwise, let $\phi\: \G \to T$ be a harmonic, non-degenerate morphism of $M$-metrised graphs of minimal degree $\deg(\phi) = \ggon(\G)$, where $T$ is a tree. Then for any $v\in V(T)$, the divisor $D'_v = (v)$ has rank $r(D'_v) = 1$. By Lemma \ref{l_phirank}, $D_v = \phi^*(D'_v) \in \Div(\G)$ is an effective divisor of degree $\deg(D) = \deg(\phi) = \ggon(\G)$, and rank $\rnk(D_v) \geq \rnk(D'_v) = 1$. Hence $D_v$ is winning, and $\gon(\G) \leq \deg(D_v) = \ggon(\G)$.
\end{proof}

\let\dgon=\gon

\section{Combinatorial perspective}
\label{sec:comb_appendix}

This section compares the divisorial gonality of a monoid-metrised graph $\Gamma$ to the divisorial gonality of the underlying combinatorial graph $G$.
The main result of this section is that there exists a subdivision $H$ of $G$ such that $\gon(\Gamma) \geq \dgon(H)$.
This confirms that the monoid-metrised gonality is an improvement over the usual notion of divisorial gonality when the goal is to lower bound the gonality of a curve, and shows that two combinatorial lower bounds on the gonality (\emph{treewidth} and \emph{scramble number}) also apply to monoid-metrised graphs.

It is easy to see that Definition \ref{d_graph} is equivalent to the standard notion of multigraphs\footnote{A \emph{multigraph} is a finite graph where loops and parallel edges are allowed.} in combinatorics, and that the divisorial gonality of an $\N$-metrised graph where all edges have length $1$ corresponds to the standard notion of divisorial gonality (as defined in \cite{Baker2008Specialization,VanderWegen2017,VanDobbenDeBruyn2012,VanDobbenDeBruynGijswijt2020}).
Our goal will be to replace the lengths of the edges of a monoid-metrised graph by natural numbers, in such a way that positive rank divisors are preserved.

Assume once again that $M$ and $N$ are sharp, integral monoids.
Following \cite{Molcho2018}, we define an \emph{edge contraction} as follows.

\begin{definition}
	Let $f : M \to N$ be a homomorphism.
	For an $M$-metrised graph $\Gamma$, the \emp{edge contraction} of $\Gamma$ along $f$ is the $N$-metrised graph $\Gamma'$ obtained by applying $f$ to edge lengths of $\Gamma$ and subsequently contracting all edges of length $0$.
	(In other words, $\Gamma'$ is the quotient of $\Gamma$ where we identify $e \sim r(e) \sim i(e) \sim r(i(e))$ whenever $f(l(e)) = 0$. Equipped with the length function $f \circ l$, this becomes an $N$-metrised graph.)
\end{definition}

\begin{definition}
	Let $f : M \to N$ be a homomorphism, and let $\varphi : \Gamma \to \Gamma'$ be the edge contraction of $\Gamma$ along $f$.
	The \emp{pushforward} map on divisors is the group homomorphism
	\[ f_*\: \Div(\Gamma) \to \Div(\Gamma'),\ 
	   D \mapsto \sum_{v \in V(\Gamma)} D(v) [\phi(v)]. \]
\end{definition}

\begin{proposition}
	\label{prop:contraction-PL}
	Let $\Gamma$ be an $M$-metrised graph, let $f : M \to N$ be a homomorphism, let $\varphi : \Gamma \to \Gamma'$ be the edge contraction of $\Gamma$ along $f$, and let $g \in \PL(\Gamma)$.
	Then $f^{gp} \circ g$ is a well-defined, piecewise linear function on $\Gamma'$, and $\Delta(f^{gp} \circ g) = f_*(\Delta(g))$.
\end{proposition}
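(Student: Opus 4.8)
The plan is to prove the three assertions in the order stated---that $f^{gp} \circ g$ descends to a well-defined function on $V(\Gamma')$, that this descended function is piecewise linear, and finally that $\Delta'(f^{gp}\circ g) = f_*(\Delta(g))$---all driven by a single observation. Since $f^{gp}$ extends $f$ on $M \subseteq M^{gp}$, for any half-edge $e$ of $\Gamma$ with integer slope $n_e := \frac{g(r(e)) - g(r(i(e)))}{l(e)}$ (well defined by Lemma~\ref{l_divisibility}) we have
\[
f^{gp}\bigl(g(r(e))\bigr) - f^{gp}\bigl(g(r(i(e)))\bigr) = f^{gp}\bigl(n_e \cdot l(e)\bigr) = n_e \cdot f(l(e)).
\]
Everything follows from reading this identity in the two cases $f(l(e)) = 0$ (contracted edges) and $f(l(e)) \neq 0$ (surviving edges).

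For well-definedness, recall that two vertices of $\Gamma$ are identified in $\Gamma'$ precisely when they are joined by a chain of edges $e$ with $f(l(e)) = 0$. Along each such edge the displayed identity gives $f^{gp}(g(r(e))) - f^{gp}(g(r(i(e)))) = n_e \cdot 0 = 0$, so $f^{gp}\circ g$ is constant on every fibre of $\varphi$ and hence defines a function $V(\Gamma') \to N^{gp}$. For piecewise linearity, each edge of $\Gamma'$ is the image of a unique half-edge $e$ with $f(l(e)) \neq 0$ and carries length $f(l(e))$; the same identity shows the slope of $f^{gp}\circ g$ across it equals $n_e \in \Z$, which is exactly the $\PL$ condition on $\Gamma'$.

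For the Laplacian identity I would compute both sides vertex by vertex. Unwinding the pushforward and writing the inner sum over half-edges rooted at each vertex gives
\[
f_*(\Delta(g))(v') = \sum_{v \,:\, \varphi(v) = v'} \Delta(g)(v) = \sum_{e \,:\, \varphi(r(e)) = v'} n_e,
\]
the sum over \emph{all} half-edges of $\Gamma$ rooted in the fibre over $v'$. Computing $\Delta'$ on $\Gamma'$ directly and invoking the slope preservation from the piecewise-linearity step gives
\[
\Delta'(f^{gp}\circ g)(v') = \sum_{e \,:\, \varphi(r(e)) = v',\ f(l(e)) \neq 0} n_e,
\]
the same sum but restricted to the surviving half-edges. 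Subtracting, the residual discrepancy at $v'$ is $\sum_{e \,:\, \varphi(r(e)) = v',\ f(l(e)) = 0} n_e$.

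The main obstacle---indeed the only nonformal point---is to see that this residual sum vanishes for every $v'$. Here I would use that a contracted edge $\{e, i(e)\}$ has both endpoints $r(e), r(i(e))$ identified to the \emph{same} vertex of $\Gamma'$, so if one of them lies in $\varphi^{-1}(v')$ then so does the other; hence both half-edges $e$ and $i(e)$ occur in the residual sum. Since $i$ is an involution and $l(i(e)) = l(e)$, their slopes satisfy $n_{i(e)} = -n_e$, so pairing up the two half-edges of each contracted edge cancels the sum term by term. This yields $f_*(\Delta(g))(v') = \Delta'(f^{gp}\circ g)(v')$ at every vertex $v'$, completing the proof.
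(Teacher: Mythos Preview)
Your proof is correct and follows essentially the same route as the paper's: both hinge on the identity $f^{gp}(g(r(e))) - f^{gp}(g(r(i(e)))) = n_e \cdot f(l(e))$ and on the observation that the two half-edges of a contracted edge land in the same fibre and contribute opposite slopes. The only difference is bookkeeping---the paper packages $\Delta(g)$ as a sum of per-edge divisor contributions $\Delta_{\{e,i(e)\}}(g)$ and pushes each forward, whereas you evaluate both sides vertex by vertex; the cancellation argument is identical.
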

\begin{proof}
	For every $e \in H(\Gamma)$, there exists some $m \in \Z$ such that $g(r(e)) - g(r(i(e))) = ml(e)$, hence $(f^{gp} \circ g)(r(e)) - (f^{gp} \circ g)(r(i(e))) = mf(l(e))$.
	For edges that are contracted (i.e.{} $f(l(e)) = 0$), this shows that $f^{gp} \circ g$ takes on the same values at both endpoints of that edge, so $f^{gp} \circ g$ is a well-defined function $V(\Gamma') \to N^{gp}$.
	For edges that are not contracted, this shows that the difference between the values of $f^{gp} \circ g$ at the endpoints of that edge is a multiple of the length of the edge, so $f^{gp} \circ g$ is piecewise linear.
	
	It remains to prove that $\Delta(f^{gp} \circ g) = f_*(\Delta(g))$.
	Given a piecewise linear function $g \in \PL(\Gamma)$ and an edge $\{e,i(e)\} \in E(\Gamma)$, we define the \emph{contribution of $\{e,i(e)\}$ to $\Delta(g)$} to be the divisor
	\[ \Delta_{\{e,i(e)\}}(g) = \frac{g(r(e)) - g(r(i(e)))}{l(e)}[r(e)] + \frac{g(r(i(e))) - g(r(e))}{l(e)}[r(i(e))]. \]
	By the above, if $f(l(e)) \neq 0$, then
	\[ \frac{g(r(e)) - g(r(i(e)))}{l(e)} = \frac{(f^{gp} \circ g)(r(e)) - (f^{gp} \circ g)(r(i(e)))}{f(l(e))}, \]
	so for each non-contracted edge $\{e,i(e)\} \in E(\Gamma)$ we have $f_*(\Delta_{\{e,i(e)\}}(g)) = \Delta_{\{\varphi(e),\varphi(i(e))\}}(f^{gp} \circ g)$.
	On the other hand, if $f(l(e)) = 0$, then the endpoints of the edge $\{e,i(e)\}$ are identified with one another, so $f_*(\Delta_{\{e,i(e)\}}(g)) = 0$, because the two non-zero entries of $\Delta_{\{e,i(e)\}}(g)$ are added up and cancel out.
	Therefore we have
	\begin{align*}
		f_*(\Delta(g)) &= f_*\left(\sum_{\{e,i(e)\} \in E(\Gamma)} \, \Delta_{\{e,i(e)\}}(g) \right)\\[.5ex]
		&= \sum_{\{e',i'(e')\} \in E(\Gamma')} \Delta_{\{e',i'(e')\}}(f^{gp} \circ g)\\[1.5ex]
		&= \Delta(f^{gp} \circ g). \qedhere
	\end{align*}
\end{proof}
\begin{lemma}
	\label{lem:contraction-rank}
	Let $\Gamma$ be an $M$-metrised graph, let $f : M \to N$ be a homomorphism, and let $D \in \Div(\Gamma)$.
	Then $r(f_*(D)) \geq r(D)$.
\end{lemma}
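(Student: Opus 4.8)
The plan is to mimic the proof of Lemma \ref{l_phirank}, exploiting that $f_*$ descends to linear equivalence classes and that effective divisors on $\Gamma'$ lift to effective divisors on $\Gamma$ of equal degree. The key structural difference is that here we have only the single pushforward $f_*$ (rather than a pullback/pushforward pair), but the role of the pullback is played by an elementary vertex-wise lift.

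First I would record the formal properties of $f_*$ that are needed. From the formula $f_*(D) = \sum_{v} D(v)[\varphi(v)]$ it is immediate that $f_*$ preserves degree and sends effective divisors to effective divisors. Moreover, Proposition \ref{prop:contraction-PL} shows that $f_*$ carries principal divisors to principal divisors: if $D - D' = \Delta(g)$ then $f_*(D) - f_*(D') = f_*(\Delta(g)) = \Delta(f^{gp}\circ g)$. Hence $f_*$ respects linear equivalence, and in particular maps $|D|$ into $|f_*(D)|$.

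Next I would establish the lifting step, which is the heart of the argument. Since the contraction map $\varphi \colon V(\Gamma) \to V(\Gamma')$ is surjective, I can choose for each $v' \in V(\Gamma')$ a vertex $\tilde v' \in V(\Gamma)$ with $\varphi(\tilde v') = v'$. Then any effective divisor $F' = \sum_{v'} F'(v')[v']$ on $\Gamma'$ lifts to the effective divisor $F = \sum_{v'} F'(v')[\tilde v']$ on $\Gamma$, which has the same degree and satisfies $f_*(F) = F'$. With these ingredients the proof runs as follows. Set $k = r(D)$; we may assume $k \geq 0$, since the rank is always at least $-1$. Let $F' \in \Div_+^k(\Gamma')$ be arbitrary, and lift it to an effective $F \in \Div_+^k(\Gamma)$ with $f_*(F) = F'$ as above. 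Because $r(D) = k$, there exists $E \in |D|$ with $E \geq F$. Pushing forward, $f_*(E) \in |f_*(D)|$ by the first step, and $f_*(E) - F' = f_*(E - F)$ is effective since $E - F \geq 0$ and $f_*$ preserves effectivity. Thus $f_*(E)$ is an effective divisor in $|f_*(D)|$ dominating $F'$, so $|f_*(D) - F'| \neq \emptyset$. As $F'$ was an arbitrary element of $\Div_+^k(\Gamma')$, this yields $r(f_*(D)) \geq k = r(D)$.

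The only genuinely delicate point is the lifting step, but it is painless here precisely because $\varphi$ is surjective on vertices; all other steps are formal consequences of Proposition \ref{prop:contraction-PL} and the definition of rank. In particular, I do not expect any serious obstacle: the argument is a streamlined version of Lemma \ref{l_phirank}, with the compatibility $\phi^*\phi_* \geq \mathrm{id}$ used there replaced by the exact identity $f_*(F) = F'$ coming from the vertex-wise lift.
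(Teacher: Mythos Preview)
Your proof is correct and follows essentially the same route as the paper's own argument: both set $k=r(D)$, lift an arbitrary $F'\in\Div_+^k(\Gamma')$ to some $F\in\Div_+^k(\Gamma)$ with $f_*(F)=F'$ (the paper phrases this as $f_*[\Div_+^k(\Gamma)]=\Div_+^k(\Gamma')$, you spell out a vertex-wise section), find $E\in|D|$ with $E\ge F$, and push forward via Proposition~\ref{prop:contraction-PL}. The only cosmetic difference is that the paper works directly with a $g\in\PL(\Gamma)$ satisfying $D-\Delta(g)\ge F$ rather than naming $E$, but the content is identical.
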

\begin{proof}
	Let $\Gamma'$ be the contraction of $\Gamma$ along $f$.
	Write $k = r(D)$, and let $F' \in \Div_+^k(\Gamma')$ be given.
	It is easy to see that $f_*[\Div_+^k(\Gamma)] = \Div_+^k(\Gamma')$, so we may choose some $F \in \Div_+^k(\Gamma)$ such that $f_*(F) = F'$.
	Since $r(D) \geq k$, we have $|D - F| \neq \varnothing$, so we may choose $g \in \PL(\Gamma)$ such that $D - \Delta(g) \geq F$.
	By Proposition \ref{prop:contraction-PL}, we have
	\[ f_*(D) - \Delta(f^{gp} \circ g) = f_*(D - \Delta(g)) \geq f_*(F) = F', \]
	which shows that $|f_*(D) - F'| \neq \varnothing$.
	This holds for all $F' \in \Div_+^k(\Gamma')$, so we have $r(f_*(D)) \geq k = r(D)$.
\end{proof}
\begin{remark}
	\label{rmk:injective-contraction}
	If $f : M \to N$ is injective, then $r(f_*(D)) = r(D)$.
	To see this, note that $\Gamma'$ is now the same monoid-metrised graph, but with weights in a larger monoid $N \supseteq M$.
	It is easy to see that $M^{gp} \subseteq N^{gp}$ and $\Prin(\Gamma) \subseteq \Prin(\Gamma')$.
	Conversely, given $g' \in \PL(\Gamma')$, choose some $v_0 \in V(\Gamma')$, and define $g(v) = g'(v) - g'(v_0)$.
	Then $g$ is piecewise linear and differs from $g'$ by a constant, so $\Delta(g) = \Delta(g')$.
	Moreover, using piecewise linearity of $g'$ and connectedness of $\Gamma$, we see that $g$ takes values in $M^{gp} \subseteq N^{gp}$, so $g \in \PL(\Gamma)$.
	This shows that $\Prin(\Gamma) = \Prin(\Gamma')$, so $r(f_*(D)) = r(D)$.
\end{remark}

The following corollary is immediate from Lemma \ref{lem:contraction-rank}.

\begin{corollary}
	\label{cor:contraction-dgon}
	Let $f : M \to N$ be a homomorphism, and let $\Gamma'$ be the edge contraction of $\Gamma$ along $f$.
	Then $\dgon(\Gamma') \leq \dgon(\Gamma)$.
\end{corollary}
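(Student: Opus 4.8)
The plan is to exhibit on $\Gamma'$ a divisor of positive rank whose degree is at most $\dgon(\Gamma)$; the inequality then follows directly from the definition of divisorial gonality as a minimum over winning divisors. The natural candidate is the pushforward of a minimal winning divisor on $\Gamma$.

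Concretely, I would first note that $\dgon(\Gamma)$ is finite—indeed $\dgon(\Gamma) \le |V(\Gamma)|$, since a sufficiently large effective divisor has positive rank—so the minimum is attained and there exists $D \in \Div(\Gamma)$ with $r(D) \ge 1$ and $\deg(D) = \dgon(\Gamma)$. Let $f_* : \Div(\Gamma) \to \Div(\Gamma')$ be the pushforward along the edge contraction $\varphi : \Gamma \to \Gamma'$, and consider the divisor $f_*(D) \in \Div(\Gamma')$.

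Two observations then close the argument. First, by Lemma \ref{lem:contraction-rank} the pushforward does not decrease rank, so $r(f_*(D)) \ge r(D) \ge 1$, meaning $f_*(D)$ is again a winning divisor on $\Gamma'$. Second, the pushforward preserves degree: from the defining formula $f_*(D) = \sum_{v \in V(\Gamma)} D(v)[\varphi(v)]$ and the surjectivity of $\varphi$ on vertices, one reads off $\deg(f_*(D)) = \sum_{v} D(v) = \deg(D)$, exactly as in the degree identity already recorded for the harmonic pushforward. Combining these, $f_*(D)$ is a divisor on $\Gamma'$ of positive rank and degree $\dgon(\Gamma)$, so $\dgon(\Gamma') \le \deg(f_*(D)) = \dgon(\Gamma)$.

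I do not anticipate a genuine obstacle here, and this matches the paper's assertion that the corollary is immediate: all of the substantive work has been absorbed into Lemma \ref{lem:contraction-rank}, and what remains is the trivial fact that $f_*$ preserves degree. The only point deserving a moment's care is confirming that the minimum defining $\dgon(\Gamma)$ is actually attained by some finite-degree divisor, which holds because the gonality is bounded above by the number of vertices.
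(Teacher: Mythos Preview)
Your argument is correct and is precisely the unpacking of what the paper means by ``immediate from Lemma \ref{lem:contraction-rank}'': take a minimal positive-rank divisor $D$ on $\Gamma$, push it forward, invoke the lemma for the rank bound, and note that $f_*$ preserves degree. There is nothing to add.
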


We also need the following lemma, which is a special case of \cite[Ch.~I, Prop.~2.2.1]{Ogus}.

\begin{lemma}
	\label{lem:strictly-positive-functional}
	If $M$ is finitely generated \textup(in addition to being sharp and integral\textup), then there exists a homomorphism $f : M \to \N$ such that $f(m) > 0$ for all $m \neq 0$.
\end{lemma}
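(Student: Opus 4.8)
The plan is to realise $M$ as a sharp submonoid of a lattice and then separate it from its negative by a rational linear functional, using convex geometry. First I would pass to the groupification: since $M$ is integral it embeds in $M^{gp}$, and since $M$ is finitely generated $M^{gp}$ is a finitely generated abelian group, say $M^{gp} \cong \Z^r \oplus T$ with $T$ its torsion subgroup. Let $\pi \colon M^{gp} \to \Lambda := M^{gp}/T \cong \Z^r$ be the projection. The key preliminary observation is that no nonzero element of $M$ is torsion: if $m \in M$ satisfies $nm = 0$ for some $n \geq 1$, then $-m = (n-1)m \in M$, so sharpness forces $m = 0$. Consequently $\pi$ is injective on $M$, and $S := \pi(M)$ is a finitely generated submonoid of $\Lambda$ that is again sharp (an element $s$ and its negative both lying in $S$ would pull back to $m, m' \in M$ with $m + m'$ torsion, hence $m + m' = 0$, forcing $-m \in M$ and $m = 0$). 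It therefore suffices to produce a linear functional $\lambda \colon \Lambda \to \Z$ with $\lambda(s) > 0$ for every $s \in S \setminus \{0\}$; composing with $\pi$ then yields the desired $f \colon M \to \N$.

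Next I would consider the convex cone $C = \R_{\geq 0}\, S \subseteq \Lambda \otimes \R \cong \R^r$ generated by the (finitely many) images of the generators of $M$; this is a rational polyhedral cone. The heart of the argument is to show that $C$ is \emph{pointed}, i.e.\ $C \cap (-C) = \{0\}$. Suppose not; since $C$ is rational, its lineality space is rational and contains a nonzero lattice point $w$, so that $w, -w \in C$. Writing $w$ as a nonnegative \emph{rational} combination of elements of $S$ and clearing denominators gives a positive integer $N$ with $Nw \in S$, and similarly some $N'$ with $N'(-w) \in S$; passing to a common multiple $L$ we obtain $Lw \in S$ and $-Lw \in S$ with $Lw \neq 0$, contradicting the sharpness of $S$. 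Hence $C$ is pointed.

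Finally, pointedness of $C$ means its dual cone $C^\vee \subseteq (\R^r)^*$ is full-dimensional, so it has nonempty interior; any $\lambda$ in the interior satisfies $\lambda(v) > 0$ for all $v \in C \setminus \{0\}$. As $C^\vee$ is again rational polyhedral, rational points are dense in its interior, so I can pick such a $\lambda$ with integer coordinates (scaling by a positive integer keeps it in the interior). This $\lambda \colon \Lambda \to \Z$ is strictly positive on $C \setminus \{0\} \supseteq S \setminus \{0\}$, and $f := \lambda \circ \pi$ is then a monoid homomorphism $M \to \Z$ with $f(m) > 0$ for all $m \neq 0$; in particular its image lies in $\N$, as required. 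The main obstacle is the pointedness step, where one must convert the monoid-theoretic hypothesis of sharpness (a statement about integer sums) into the geometric statement that the real cone contains no line — the rationality of $C$ together with the scaling trick are precisely what bridge this gap.
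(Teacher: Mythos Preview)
Your argument is correct. The paper does not actually prove this lemma; it merely records it as a special case of \cite[Ch.~I, Prop.~2.2.1]{Ogus} and moves on. What you have written is essentially the standard proof one finds behind that citation: embed the sharp integral monoid into a lattice (after killing torsion, which sharpness shows meets $M$ trivially), observe that the real cone it spans is pointed, and then pick an integral functional in the interior of the dual cone. Each step is handled cleanly, including the passage from sharpness of $S$ to pointedness of $C$ via rationality of the lineality space and the clearing-of-denominators trick. So your proposal is not so much a different route as a fleshed-out version of what the paper leaves to the reference; the benefit is that your write-up is self-contained.
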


We now come to the main result of this section, which compares the divisorial gonality of a monoid-metrised graph with the divisorial gonality of the underlying combinatorial graph.

\begin{definition}
	Let $G$ be a graph.
	\begin{enumerate}
		\renewcommand{\labelenumi}{(\alph{enumi})}
		\item A \emp{subdivision operation} adds an extra vertex on the midpoint of some edge of $G$.\footnote{More formally, in terms of terminology from Definition \ref{d_graph}, we delete an edge $\{e,i(e)\}$ from $G$, and we add a new vertex $w$ and two new edges $\{e',i(e')\},\{e'',i(e'')\}$ to $G$, where $r(e') = r(e)$, $r(i(e')) = r(e'') = w$, and $r(i(e'')) = r(i(e))$.}
		
		\item A \emp{subdivision} of $G$ is a graph $H$ that can be obtained from $G$ after a finite sequence of subdivision operations.
		
		\item A \emp{loop} in $G$ is an edge $\{e,i(e)\}$ such that $r(e) = r(i(e))$.
	\end{enumerate}
\end{definition}

It is easy to see that loops can be freely added and deleted from the graph.
\begin{proposition}
	\label{prop:remove-loops}
	Let $\Gamma$ be an $M$-metrised graph, and let $\Gamma'$ be the $M$-metrised graph obtained from $\Gamma$ by removing all loops.
	Then $\Prin(\Gamma) = \Prin(\Gamma')$, $r_\Gamma(D) = r_{\Gamma'}(D)$ for every $D \in \Div(\Gamma)$, and $\dgon(\Gamma) = \dgon(\Gamma')$.
\end{proposition}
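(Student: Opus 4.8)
The plan is to observe that deleting loops changes neither the piecewise linear functions nor the Laplacian, so that all three claimed equalities follow formally. First I would note that a loop leaves the vertex set untouched, $V(\Gamma) = V(\Gamma')$, so the divisor groups coincide, $\Div(\Gamma) = \Div(\Gamma')$, as do the sets $\Div_+^k$ of effective divisors of each degree. Deleting loops also preserves connectedness, so $\Gamma'$ is a legitimate $M$-metrised graph.

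Next I would compare the piecewise linear functions. The defining conditions of $\PL(\Gamma)$ and $\PL(\Gamma')$ (Definition \ref{d_PL}) differ only in that $\PL(\Gamma)$ additionally imposes the condition coming from each loop half-edge $e$. But for a loop we have $r(e) = r(i(e)) =: v$, so this condition reads $g(v) - g(v) = 0 \in \langle l(e) \rangle$, which is automatic. Hence the two constraint systems cut out the same set of functions $V \to M^{gp}$, giving $\PL(\Gamma) = \PL(\Gamma')$.

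I would then check that the Laplacians agree on this common domain. In the formula of Definition \ref{d_deltam}, each half-edge $e$ of a loop at $v$ contributes the term $\frac{g(v) - g(r(i(e)))}{l(e)} = \frac{g(v) - g(v)}{l(e)} = 0$ to the coefficient of $[v]$. Thus loop half-edges contribute nothing, and $\Delta_\Gamma(g) = \Delta_{\Gamma'}(g)$ for every $g \in \PL(\Gamma) = \PL(\Gamma')$. Consequently $\Prin(\Gamma) = \Delta_\Gamma[\PL(\Gamma)] = \Delta_{\Gamma'}[\PL(\Gamma')] = \Prin(\Gamma')$, which is the first assertion.

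Finally, since the groups of divisors and of principal divisors coincide, so does linear equivalence, and hence so do all linear systems $|D - F|$. Because the rank is defined purely through the sets $\Div_+^k$ and the non-emptiness of these linear systems, this yields $r_\Gamma(D) = r_{\Gamma'}(D)$ for every $D \in \Div(\Gamma)$; minimising the degree over divisors of positive rank then gives $\dgon(\Gamma) = \dgon(\Gamma')$. There is essentially no obstacle here: the entire content is the single observation that a loop is rooted at one vertex, and therefore imposes a vacuous piecewise-linearity constraint and a vanishing Laplacian contribution.
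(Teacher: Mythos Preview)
Your proof is correct. The paper does not actually provide a proof of this proposition: it merely prefaces the statement with ``It is easy to see that loops can be freely added and deleted from the graph'' and moves on. Your argument is exactly the natural verification the authors had in mind---that a loop contributes a vacuous constraint to $\PL$ and a zero term to the Laplacian---so there is nothing to compare.
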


A loopless $\N$-metrised graph can be converted to an unmetrised graph in the following way.
\begin{lemma}
	\label{lem:rank-determining}
	Let $\Gamma$ be an $\N$-metrised graph without loops, and let $G$ be the underlying graph.
	Then $\dgon(\Gamma) \geq \dgon(H)$, where $H$ is the graph obtained from $G$ by subdividing every edge $\{e,i(e)\} \in E(G)$ exactly $l(e) - 1$ times.
\end{lemma}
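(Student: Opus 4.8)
The plan is to produce, from a divisor $D$ on $\Gamma$ of minimal degree with $\rnk(D)\ge 1$, an equally cheap positive-rank divisor on $H$. Writing $\iota\colon V(\Gamma)\hookrightarrow V(H)$ for the inclusion of the original vertices, I would show that $\iota_*D$ still has positive rank on $H$; since $\deg(\iota_*D)=\deg(D)=\dgon(\Gamma)$, this gives $\dgon(H)\le\dgon(\Gamma)$. (If $\Gamma$ is a tree everything is trivial, so assume not.)

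The first step is to compare linear equivalence on the two graphs. Every $g\in\PL(\Gamma)$ extends to $\tilde g\in\PL(H)$ by interpolating affinely along each subdivided path: if $e$ has endpoints $u,v$ and $g(u)-g(v)=m\,l(e)$, place the values $g(u),g(u)-m,\dots,g(v)$ on the path replacing $e$. Since $H$ has unit lengths this is automatically piecewise linear, and because $\tilde g$ is affine along each path its Laplacian vanishes at every subdivision vertex, so $\Delta_H(\tilde g)=\iota_*(\Delta_\Gamma g)$. Hence $\iota_*\Prin(\Gamma)\subseteq\Prin(H)$, and $\iota$ sends effective representatives and linear equivalences on $\Gamma$ to $H$. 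In particular, since $\rnk(D)\ge 1$ gives for each $v\in V(\Gamma)$ an effective $E\sim_\Gamma D$ with $E\ge[v]$, the same $E$ shows $|\iota_*D-[v]|\neq\varnothing$ on $H$. Thus $\iota_*D$ reaches every original vertex, and it remains to reach the subdivision vertices.

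The main tool for this is a convexity lemma: if $\iota_*D+\Delta_H(a)\ge 0$ and $\iota_*D+\Delta_H(b)\ge 0$, then $\iota_*D+\Delta_H(\min(a,b))\ge 0$, proved pointwise by comparing $\min(a,b)$ to whichever of $a,b$ it equals at the given vertex. Fix a subdivision vertex $w_j$ interior to the path $P\colon u=w_0,\dots,w_\ell=v$ coming from an edge $e$. Reaching $u$ and $v$ provides effective $A=\iota_*D+\Delta_H(a)\ge[u]$ and $B=\iota_*D+\Delta_H(b)\ge[v]$ with $A,B$ supported on $V(\Gamma)$; as these vanish at every interior vertex, $a$ and $b$ are forced to be affine along $P$, of integer slopes $\alpha$ and $\beta$. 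When $\alpha\neq\beta$ I would choose the integer $\lambda$ making the two affine functions $a$ and $b+\lambda$ cross exactly at $w_j$; then $\min(a,b+\lambda)$ has a downward kink at $w_j$, so the corresponding effective divisor has value $|\alpha-\beta|\ge 1$ there, reaching it.

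The main obstacle is the degenerate case $\alpha=\beta$, where the two reaching potentials have the same slope along $P$ and no such crossing exists. This is a genuine phenomenon: for the graph of two parallel edges of lengths $1$ and $2$ the subdivision $H$ is a triangle, the only positive-rank divisor of degree $2$ is $[u]+[v]$, and every effective representative reaching an endpoint is $[u]+[v]$ itself, of slope $0$ along $P$. Here the interior vertex must instead be reached by a representative supported in the interior (for the triangle, $[u]+[v]\sim 2[w]$), obtained from a PL function that is peaked along $P$ rather than monotone. I would handle this case by marching one chip inward from each endpoint by successive plateau firings whose fronts sweep across every interior vertex, the effectivity of each step again following from the convexity lemma. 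The delicate point, and the crux of the whole proof, is to verify that reachability of the two endpoints really supplies and sustains the chips driving this inward march in all configurations.
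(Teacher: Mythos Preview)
Your setup coincides with the paper's: push $D$ forward along $\iota\colon V(\Gamma)\hookrightarrow V(H)$, extend each $g_v\in\PL(\Gamma)$ to $H$ by linear interpolation, observe $\Delta_H(\tilde g_v)=\iota_*\Delta_\Gamma(g_v)$, and conclude that $\iota_*D$ reaches every original vertex of $\Gamma$. At this point the paper simply invokes a known rank-determining result (\cite[Lem.~2.6]{VanDobbenDeBruynGijswijt2020}, or equivalently Luo's theorem) to conclude $r(\iota_*D)\ge 1$; looplessness of $\Gamma$ is exactly the hypothesis needed there. You instead try to prove this step by hand.

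Your min-convexity argument for the case $\alpha\neq\beta$ is correct and is a nice direct proof in that case. The gap is the degenerate case $\alpha=\beta$. The ``marching one chip inward from each endpoint by successive plateau firings'' is not a proof: you do not specify which sets are fired, and the convexity lemma does not by itself guarantee that any such sequence of firings stays effective. In your own triangle example, firing $\{u\}$ from $A=[u]+[v]$ already fails (it produces $-[u]+2[v]+[w]$); the move that works is firing $\{u,v\}$ simultaneously, which is not obtained as a minimum of the two reaching potentials $a,b$ (both constant there) nor as a single-endpoint march. In general the correct firing set depends on the global structure of the reduced representative, not just on the restriction of $a,b$ to the path $P$. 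What is actually needed here is the content of the cited lemma: pass to the $w_j$-reduced representative $D'$ of $\iota_*D$ and run Dhar's burning algorithm from $w_j$; looplessness ensures the two original endpoints $u\neq v$ bound the path on both sides, and reachability of $u$ and $v$ forces the burn to stall, giving $D'(w_j)\ge 1$. That argument does not reduce to tropical convexity of the two specific potentials you chose, which is why the paper outsources it.
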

\begin{proof}
	Let $\iota : V(\Gamma) \hookrightarrow V(H)$ denote the inclusion of vertex sets, and let $\iota_* : \Div(\Gamma) \to \Div(H)$ be the associated pushforward map.
	Let $D \in \Div(\Gamma)$ be a positive rank divisor of minimum degree.
	For every $v \in V(\Gamma)$, choose some $g_v \in \PL(\Gamma)$ such that $D - \Delta(g_v) \geq [v]$.
	Every edge $\{e,i(e)\}$ in $\Gamma$ corresponds to a path of length $l(e)$ in $H$, so $g_v$ can be extended to a piecewise linear function $h_v \in \PL(H)$ by linear interpolation.
	Clearly $\iota_*(\Delta(g_v)) = \Delta(h_v)$, so we have $\iota_*(D) - \Delta(h_v) \geq [v]$ for all $v \in V(\Gamma)$.
	This shows that $\iota_*(D) \in \Div(H)$ reaches all original vertices of $\Gamma$.
	Because $\Gamma$ is loopless, it now follows from \cite[Lem.~2.6]{VanDobbenDeBruynGijswijt2020} that $r(\iota_*(D)) \geq 1$.\footnote{This can also be deduced from Luo's theorem on rank-determining sets \cite[Thm.~1.6]{Luo11}, but this requires more work in going back and forth between $H$ and the associated metric graph (in the combinatorial sense; i.e., the metric space obtained by identifying each edge of $H$ with a unit interval, glued together at their endpoints as prescribed by $H$).}
	Therefore, $\dgon(H) \leq \deg(\iota_*(D)) = \deg(D) = \dgon(\Gamma)$.
\end{proof}

We now have all the ingredients to prove the main theorem of this section.
\begin{theorem}[{Compare \cite[Theorem 5.1]{VanDobbenDeBruynGijswijt2020}}]
	\label{thm:combinatorial-dgon}
	Let $\Gamma$ be an $M$-metrised graph, and let $G$ be the underlying graph.
	Then there is a subdivision $H$ of $G$ such that $\dgon(\Gamma) \geq \dgon(H)$.
\end{theorem}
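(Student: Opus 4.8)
The plan is to establish the inequality through a chain of reductions, passing from the $M$-metrised graph $\Gamma$ first to an $\N$-metrised graph with the same underlying graph $G$, and then to an unmetrised combinatorial graph, chaining together the comparison results assembled above. The subdivision $H$ will ultimately be the one produced by Lemma \ref{lem:rank-determining}, corrected to account for loops.

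First I would reduce to the case where $M$ is finitely generated. Since $\Gamma$ is a finite graph, only finitely many elements of $M$ occur as edge lengths; let $M' \subseteq M$ be the submonoid they generate. Then $M'$ is finitely generated, and it inherits sharpness and integrality from $M$ (an element of $M'$ that is invertible in $M'$ is invertible in $M$, hence zero; cancellation passes to submonoids). Viewing $\Gamma$ as an $M'$-metrised graph and applying Remark \ref{rmk:injective-contraction} to the inclusion $M' \hookrightarrow M$, which is injective and therefore contracts no edges, the divisorial gonality is unchanged. So I may assume $M$ is finitely generated. Now by Lemma \ref{lem:strictly-positive-functional} there is a homomorphism $f : M \to \N$ with $f(m) > 0$ for all $m \neq 0$. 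The strict positivity is the key feature: since every edge of $\Gamma$ has nonzero length, $f(l(e)) > 0$ for all edges, so the edge contraction $\Gamma'$ of $\Gamma$ along $f$ contracts no edges and hence has underlying combinatorial graph exactly $G$, now $\N$-metrised. By Corollary \ref{cor:contraction-dgon}, $\dgon(\Gamma') \leq \dgon(\Gamma)$.

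It remains to pass from the $\N$-metrised graph $\Gamma'$ to a subdivision of $G$. Since Lemma \ref{lem:rank-determining} requires looplessness, I would first delete all loops of $\Gamma'$, obtaining $\Gamma''$ with underlying graph $G_0$ (equal to $G$ with its loops removed) and $\dgon(\Gamma'') = \dgon(\Gamma')$ by Proposition \ref{prop:remove-loops}. Applying Lemma \ref{lem:rank-determining} to $\Gamma''$ then yields a subdivision $H_0$ of $G_0$, obtained by subdividing each edge $l(e)-1$ times, with $\dgon(H_0) \leq \dgon(\Gamma'')$. Finally I would form $H$ from $H_0$ by reinserting the loops of $G$, left unsubdivided; then $H$ is a subdivision of $G$ (subdivide the non-loop edges as prescribed, leave the loops alone), and $\dgon(H) = \dgon(H_0)$ by Proposition \ref{prop:remove-loops} once more, applied to $H$ regarded as an $\N$-metrised graph with all lengths $1$. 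Concatenating the chain gives $\dgon(\Gamma) \geq \dgon(\Gamma') = \dgon(\Gamma'') \geq \dgon(H_0) = \dgon(H)$, as required.

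The main obstacle is the combination of the first reduction with the production of a strictly positive functional. Without passing to the finitely generated submonoid $M'$, Lemma \ref{lem:strictly-positive-functional} is simply not available; and without the strict positivity $f(m)>0$ for $m\neq 0$, one cannot guarantee that the contraction $\Gamma'$ retains the full underlying graph $G$ rather than collapsing some positive-length edges, which would break the comparison with a subdivision of $G$. The loop bookkeeping in the last paragraph is routine by comparison, but it cannot be omitted, since Lemma \ref{lem:rank-determining} genuinely needs a loopless graph as input.
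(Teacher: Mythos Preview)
Your proof is correct and follows essentially the same route as the paper's: reduce to a finitely generated monoid via Remark \ref{rmk:injective-contraction}, push down to $\N$ via a strictly positive functional (Lemma \ref{lem:strictly-positive-functional} and Corollary \ref{cor:contraction-dgon}), and then subdivide via Lemma \ref{lem:rank-determining}, with Proposition \ref{prop:remove-loops} handling the loops. The only difference is cosmetic---the paper removes loops before applying $f$ rather than after---which is immaterial since $f$ contracts no edges.
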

\begin{proof}
	By Remark \ref{rmk:injective-contraction}, we may assume without loss of generality that $M$ is the monoid generated by the lengths of the edges in $\Gamma$, so that $M$ is finitely generated.
	Furthermore, by Proposition \ref{prop:remove-loops}, we may freely remove loops from $\Gamma$ and put them back after we have found our subdivision $H$.
	So assume without loss of generality that $\Gamma$ is loopless. 
	
	By Lemma \ref{lem:strictly-positive-functional}, we may choose a homomorphism $f : M \to \N$ such that $f(m) \neq 0$ for all $m \neq 0$.
	Let $\Gamma'$ be the contraction of $\Gamma$ along $f$.
	Since $f(l(e)) \neq 0$ for all $e \in E$, no edges are contracted, so $\Gamma'$ is simply the $\N$-metrised graph obtained from $\Gamma$ by relabelling the weights of the edges according to $f$.
	It follows from Corollary \ref{cor:contraction-dgon} that $\dgon(\Gamma) \geq \dgon(\Gamma')$, and from Lemma \ref{lem:rank-determining} that $\dgon(\Gamma') \geq \dgon(H)$ for some subdivision $H$ of $G$.
\end{proof}

An immediate consequence is that the \emph{treewidth} and \emph{scramble number} lower bounds carry over (see \cite{VanDobbenDeBruynGijswijt2020,HarpJacksonJensenSpeeter2022} for definitions).

\begin{corollary}[{Compare \cite[Cor.~5.2]{VanDobbenDeBruynGijswijt2020}, \cite[Thm.~1.1]{HarpJacksonJensenSpeeter2022}}]
	\label{cor:tw-sn}
	Let $\Gamma$ be a monoid-metrised graph, and let $G$ be the underlying graph.
	Then $\dgon(\Gamma) \geq \sn(G) \geq \tw(G)$.
\end{corollary}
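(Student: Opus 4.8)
The plan is to chain together Theorem \ref{thm:combinatorial-dgon} with the known combinatorial inequalities relating divisorial gonality, scramble number, and treewidth. The second inequality $\sn(G) \geq \tw(G)$ is purely a statement about the combinatorial graph $G$ and is part of the content of \cite[Thm.~1.1]{HarpJacksonJensenSpeeter2022} (together with \cite[Cor.~5.2]{VanDobbenDeBruynGijswijt2020}), so nothing new is needed there. The work is entirely in establishing the first inequality $\dgon(\Gamma) \geq \sn(G)$.

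For that, I would first apply Theorem \ref{thm:combinatorial-dgon} to obtain a subdivision $H$ of $G$ with $\dgon(\Gamma) \geq \dgon(H)$, reducing the problem to a statement about the ordinary (unmetrised) graph $H$. I would then invoke the scramble number bound for combinatorial graphs, $\dgon(H) \geq \sn(H)$ \cite[Thm.~1.1]{HarpJacksonJensenSpeeter2022}. It remains to pass from $\sn(H)$ back to $\sn(G)$, for which I claim the subdivision monotonicity $\sn(H) \geq \sn(G)$. Combining these gives $\dgon(\Gamma) \geq \dgon(H) \geq \sn(H) \geq \sn(G)$, and appending $\sn(G) \geq \tw(G)$ completes the chain.

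The main obstacle — and the only genuinely new ingredient — is the monotonicity $\sn(H) \geq \sn(G)$ under subdivision. I would prove it by transporting a scramble. Given a scramble $\mathcal{S}$ on $G$ that achieves $\sn(G)$, extend each egg (a connected subgraph) to $H$ by adjoining the subdivision vertices lying on edges already contained in the egg; this yields a scramble $\mathcal{S}'$ on $H$. One then checks that neither the hitting number nor the egg-cut number can decrease: any vertex set hitting every egg of $\mathcal{S}'$ can be converted into a set of $G$-vertices of no larger size hitting every egg of $\mathcal{S}$, by replacing each subdivision vertex with an endpoint of its edge; and any eggcut of $\mathcal{S}'$ in $H$ restricts to an eggcut of $\mathcal{S}$ in $G$ that crosses no more edges, since subdividing an edge cannot reduce the number of edges one must cut. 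Hence the scramble norm of $\mathcal{S}'$ is at least that of $\mathcal{S}$, and maximising over scrambles yields $\sn(H) \geq \sn(G)$. (Alternatively, this monotonicity may simply be cited from the scramble-number literature.) With this in hand the corollary follows immediately.
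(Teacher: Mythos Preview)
Your approach is correct and matches the paper's: the paper states the corollary as an ``immediate consequence'' of Theorem~\ref{thm:combinatorial-dgon} together with the cited results, which is exactly your chain $\dgon(\Gamma) \geq \dgon(H) \geq \sn(H) \geq \sn(G) \geq \tw(G)$. The only difference is that you spell out the subdivision monotonicity $\sn(H) \geq \sn(G)$ explicitly, whereas the paper absorbs this into the citations (subdivision invariance of treewidth and scramble number is standard and established in the cited sources).
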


These lower bounds can be useful when computing gonalities.
However, once we resort to this, the benefit of using monoid-metrised graphs instead of unmetrised graphs is lost.

As a closing remark, we point out that Theorem \ref{thm:combinatorial-dgon} is not true if we omit the subdivision.
The following example shows that $\dgon(G)$ can be much larger than $\dgon(\Gamma)$.
\begin{example}
	Fix some $k \in \N_{\geq 1}$.
	Let $\Gamma'$ be the cycle of length $2k + 1$, where one edge $\{e_0,i(e_0)\}$ has weight $k$ and all other edges have weight $1$.
	Let $v$ be the vertex that is furthest away from $\{e_0,i(e_0)\}$, and let $w$ be one of the endpoints of $\{e_0,i(e_0)\}$.
	\begin{center}
		\begin{tikzpicture}[vertex/.style={fill,circle,inner sep=1.3pt}]
			\begin{scope}[scale=2]
				\def\afst{.87}
				\foreach \x in {1,2,3,6,7,8,9,10,11,14,15} {
					\pgfmathsetmacro{\hoekV}{(\x - 1) / 15 * 360 + 180}
					\node[vertex] (v\x) at (\hoekV:1cm) {};
				}
				\foreach \x in {1,2,6,7,8,9,10,14,15} {
					\pgfmathtruncatemacro{\y}{mod(\x,15) + 1}
					\draw (v\x) -- (v\y);
					\pgfmathsetmacro{\hoekL}{(\x - .5) / 15 * 360 + 180}
					\pgfmathtruncatemacro{\lbl}{\x == 8 ? -1 : 1}
					\ifnum\lbl=1
						\node at (\hoekL:\afst cm) {\small$1$};
					\else
						\node at (\hoekL:\afst cm) {\small$k$};
					\fi
				}
				\draw[gray,dashed] (v3) arc (228:300:1cm);
				\draw[gray,dashed] (v11) arc (60:132:1cm);
				\draw (v1) node[left] {\small$v$};
				\draw (v9) node[right] {\small$w$};
			\end{scope}
		\end{tikzpicture}
	\end{center}
	The shorter arc between $v$ and $w$ contains $k$ edges of total length $k$, and the longer arc contains $k + 1$ edges of total length $2k$.
	It is not hard to see that the divisors $3[v]$ and $3[w]$ are equivalent on $\Gamma'$ and have rank $\geq 1$.
	
	Let $\Gamma$ be the $\N$-metrised graph obtained by gluing together $k$ copies of $\Gamma'$, where $w_i$ is identified with $v_{i+1}$ for all $i \in \{1,\ldots,k-1\}$.
	\begin{center}
		\begin{tikzpicture}[vertex/.style={fill,circle,inner sep=1.2pt}]
			\begin{scope}[scale=1.43,inner sep=2pt]
				\def\afst{.84}
				\def\xsh{0.1}
				\begin{scope}
					\foreach \x in {1,2,3,6,7,8,9,10,11,14,15} {
						\pgfmathsetmacro{\hoekV}{(\x - 1) / 15 * 360 + 180}
						\node[vertex] (v\x) at (\hoekV:1cm) {};
					}
					\foreach \x in {1,2,6,7,8,9,10,14,15} {
						\pgfmathtruncatemacro{\y}{mod(\x,15) + 1}
						\draw (v\x) -- (v\y);
					}
					\draw[decorate,decoration={brace,amplitude=1mm}] ([xshift=-\xsh cm]v8.north) to node[left=\xsh cm] {\small$k$} ([xshift=-\xsh cm]v9.south);
					\draw[gray,dashed] (v3) arc (228:300:1cm);
					\draw[gray,dashed] (v11) arc (60:132:1cm);
					\draw (v1) node[left] {\small$v_1$};
				\end{scope}
				\begin{scope}[shift={(v9)},xshift=1cm]
					\foreach \x in {1,2,3,6,7,8,9,10,11,14,15} {
						\pgfmathsetmacro{\hoekV}{(\x - 1) / 15 * 360 + 180}
						\node[vertex] (v\x) at (\hoekV:1cm) {};
					}
					\foreach \x in {1,2,6,7,8,9,10,14,15} {
						\pgfmathtruncatemacro{\y}{mod(\x,15) + 1}
						\draw (v\x) -- (v\y);
					}
					\draw[decorate,decoration={brace,amplitude=1mm}] ([xshift=-\xsh cm]v8.north) to node[left=\xsh cm] {\small$k$} ([xshift=-\xsh cm]v9.south);
					\draw[gray,dashed] (v3) arc (228:300:1cm);
					\draw[gray,dashed] (v11) arc (60:132:1cm);
					\draw (v1) node[right] {\small$v_2$};
					\draw (v9) node[right] {\small$v_3$};
				\end{scope}
				\begin{scope}[shift={(v9)},xshift=2.5cm,yshift=.5mm]
					\node at (-1.75,0) {$\ldots$};
					\foreach \x in {1,2,3,6,7,8,9,10,11,14,15} {
						\pgfmathsetmacro{\hoekV}{(\x - 1) / 15 * 360 + 180}
						\node[vertex] (v\x) at (\hoekV:1cm) {};
					}
					\foreach \x in {1,2,6,7,8,9,10,14,15} {
						\pgfmathtruncatemacro{\y}{mod(\x,15) + 1}
						\draw (v\x) -- (v\y);
					}
					\draw[decorate,decoration={brace,amplitude=1mm}] ([xshift=-\xsh cm]v8.north) to node[left=\xsh cm] {\small$k$} ([xshift=-\xsh cm]v9.south);
					\draw[gray,dashed] (v3) arc (228:300:1cm);
					\draw[gray,dashed] (v11) arc (60:132:1cm);
					\draw (v1) node[left] {\small$v_k$};
					\draw (v9) node[right] {\small$w_k$};
				\end{scope}
			\end{scope}
		\end{tikzpicture}
	\end{center}
	Then clearly $3[v_1] \sim 3[v_2] \sim \cdots \sim 3[w_k]$, and this divisor has positive rank, so $\dgon(\Gamma) \leq 3$.
	On the other hand, the underlying graph $G$ is generic in the sense of \cite[Def.~4.1]{CoolsDraismaPayneRobeva2012}, so it follows from \cite[Thm.~1.1]{CoolsDraismaPayneRobeva2012} that $\dgon(G) = \lfloor \frac{k}{2} \rfloor + 1$.
	This shows that $\dgon(G)$ is not bounded by a function in $\dgon(\Gamma)$.
\end{example}

\appendix

\newcommand{\ghost}{\overline{M}}

\section{Logarithmic perspective}\label{sec:log_appendix}

The purpose of this section is to give an algebro-geometric interpretation of the refined notion of geometric gonality introduced in this paper. For this we assume familiarity with the language of log geometry (see \cite{Kato1989Logarithmic-str} or \cite{Ogus}), and with log curves (see \cite{Kato2000Log-smooth-defo}). 

Let $S$ be a geometric log point and $C/S$ a proper vertical log curve. We write $\ghost = \ghost_{S}(S)$ for the global sections of the characteristic monoid of $S$ and $\Gamma$ for the dual graph of $C$, which comes with a metric taking values in $\ghost$. 

Let $C'/S$ be another proper vertical log curve with graph $\Gamma'$, and let $f\colon C \to C'$ be a flat morphism over $S$. 

\begin{lemma}
The morphism $f$ induces a morphism of $\ghost$-metrised graphs $\frak F\colon \Gamma \to \Gamma'$ in the sense of Definition \ref{d_morpismm}. 
\end{lemma}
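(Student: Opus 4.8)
The plan is to construct $\mathfrak{F}$ directly from $f$ on the combinatorial data recorded by the dual graphs, and then to check the three conditions of Definition \ref{d_morpismm} one at a time, reducing the only substantive point---compatibility of edge lengths---to a local computation at the nodes. Recall that the vertices of $\Gamma$ are the irreducible components of $C$, the edges are its nodes, and the length $l(e) \in \overline{M}$ attached to a node $q$ is the image in $\overline{M}$ of the smoothing parameter of $q$: étale-locally around $q$ the curve $C$ is cut out by $xy = \pi_e$ with $\pi_e$ mapping to $l(e)$, and the two branches of $C$ at $q$ carry the two ends of $e$. The same holds for $C'$, $\Gamma'$, $l'$.

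First I would define $\mathfrak{F}$ on vertices by sending a component $Z$ of $C$ to the component $Z'$ of $C'$ containing $f(Z)$, and on edges by $\mathfrak{F}(q) = f(q)$, recorded either as the node of $C'$ equal to $f(q)$ or, when $f(q)$ is smooth, as the vertex whose component contains $f(q)$. This vertex assignment is well defined and lands in $V(\Gamma')$ precisely because $f$ contracts no component: as $f$ is proper (both curves are proper over $S$) and flat, with one-dimensional source and target over the log point $S$, its fibres have dimension $\dim C - \dim C' = 0$, so the image of a one-dimensional component cannot be a point, and $f(Z)$ is dense in a unique component $Z'$.

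Next I would dispatch the edge-to-vertex case (the third bullet). If $f(q)$ is a smooth point of $C'$ it lies on a single component $Z'$; since $q$ lies on both branches $Z_1, Z_2$ at $q$, the point $f(q)$ lies on both $\overline{f(Z_1)}$ and $\overline{f(Z_2)}$, forcing $\mathfrak{F}(Z_1) = \mathfrak{F}(Z_2) = Z'$, which is exactly the required collapsing of both endpoints of $e$ to one vertex. The remaining, and hardest, case is when $f(q)$ is a node $q'$, where I must simultaneously match the branches of $q$ with those of $q'$ (the endpoint condition) and prove $l'(e') \in \langle l(e)\rangle$. Working étale-locally with $xy = \pi_e$ upstairs and $x'y' = \pi_{e'}$ downstairs, flatness of $f$ over $S$ forces the local monomial form $x' = u\,x^{a}$, $y' = v\,y^{a}$ with units $u,v$ and a common positive integer $a$ on the two branches (the balancing/kissing condition: unequal branch multiplicities would make a fibre dimension jump and prevent $x'y'$ from being a multiple of the smoothing parameter alone). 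This shows the branches map to the two distinct branches of $q'$ and yields $\pi_{e'} = (uv)\,\pi_e^{a}$, hence $l'(e') = a\,l(e) \in \langle l(e)\rangle$ after passing to characteristic monoids, where the units vanish; the integer $a$ is the slope $\mu_{\mathfrak{F}}(e)$.

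The main obstacle is exactly this local analysis: rigorously justifying the monomial local form and the equality of the two branch slopes. I expect to handle it by invoking the structure theory of log smooth curves over a log point, which identifies the characteristic stalk $\overline{\mathcal{M}}_{C,q} \cong \overline{M} \oplus_{\mathbb{N}} \mathbb{N}^2$ and the analogous stalk for $C'$, reading off $f^\flat$ as an $\overline{M}$-linear homomorphism between these pushouts, and using flatness to exclude the unbalanced homomorphisms; the relation $l'(e') = a\,l(e)$ then emerges as the image of the target smoothing generator. Once every component and node is treated, all three conditions of Definition \ref{d_morpismm} hold, so $\mathfrak{F}$ is a morphism of $\overline{M}$-metrised graphs.
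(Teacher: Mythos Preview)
Your outline is sound and would succeed, but your treatment of the node-to-node case diverges from the paper's and does strictly more work than the present lemma requires. You extract the divisibility $l'(e') \in \langle l(e)\rangle$ from an explicit \'etale-local normal form $x' = u\,x^{a}$, $y' = v\,y^{a}$, invoking flatness to force the two branch exponents to coincide; this is correct (it is the admissible-cover local model) but the balancing $a = a$ is not needed here, and the normal form itself takes some justification. The paper instead uses a cleaner, purely logarithmic argument: the characteristic stalks at $p$ and $p'$ sit inside $\overline{M}\times\overline{M}$ as the submonoids $\{(a,b): a-b \in \langle l(p)\rangle\}$ and $\{(a,b): a-b \in \langle l(p')\rangle\}$ via generization to the two branches, and because the log structures are strict on the smooth locus, the map $f^\flat$ on stalks is simply the restriction of the identity on $\overline{M}\times\overline{M}$. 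The inclusion $\langle l(p')\rangle \subseteq \langle l(p)\rangle$ is then immediate, and no local coordinates, monomial forms, or flatness enter this step at all---flatness is used only, as you correctly isolate, to rule out contraction of components.

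What you have effectively done is to fold the content of the paper's \emph{next} lemma (equality of branch slopes and their identification with the ramification degree, via the pushout presentation $\overline{M}\oplus_{\mathbb N}\mathbb N^2$) into the construction of $\mathfrak F$. That buys you the slope $\mu_{\mathfrak F}(e)=a$ for free, at the cost of front-loading the harder local analysis; the paper separates concerns, first getting $\mathfrak F$ as a graph morphism from the log structure alone, then computing the slope when it is actually needed.
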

\begin{proof}
Irreducible components are mapped to irreducible components. A singular point $p$ is mapped either to a singular point, or to a smooth point (in which case all components through $p$ map to the irreducible component containing $f(p)$). It thus remains to check the divisibility condition on the edge lengths for non-contracted edges. 

Suppose $p' \coloneqq f(p)$ is another singular point, and write $l(p)$, $l(p') \in \ghost$ for the lengths of the associated edges; we must show that $l(p')$ is a positive integer multiple of $l(p)$. The characteristic monoids at $p$ and $p'$ are given by  
\begin{equation*}
    \ghost_{C, p} = \{(a,b) \in \ghost \times \ghost : a - b \in \langle l(p) \rangle\}
\end{equation*}
and
\begin{equation*}
    \ghost_{C', p'} = \{(a,b) \in \ghost \times \ghost : a - b \in \langle l(p') \rangle\}. 
\end{equation*}
The map $\ghost_{C', p'} \to \ghost_{C, p}$ induced by $f$ is a restriction of the identity on $\ghost \times \ghost$ (because the log structures on $C/S$ and $C'/S$ are strict on the smooth locus), hence $\langle l(p') \rangle \subseteq \langle l(p) \rangle$. 
\end{proof}

Now since $f\colon C \to C'$ is flat, it is also finite, and hence has a degree. 

\begin{lemma}
Let $p' \in C'$ be a singular point corresponding to an edge $e'$ of $\Gamma'$. Then the multiplicity of $\frak F$ at $e'$ is the same as the degree of $f$ at $p$. 
\end{lemma}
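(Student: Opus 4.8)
The plan is to relate the local structure of the flat morphism $f \colon C \to C'$ at the singular point $p'$ to the combinatorial multiplicity of $\mathfrak{F}$ at the edge $e'$, by analysing the fibre of $f$ over $p'$ and understanding how each singular or smooth preimage contributes. First I would recall that the multiplicity $m_{\mathfrak{F}}(e')$ defined in Section \ref{ss_harmmor} is $\sum_{q \in V(\Gamma)\,:\,\mathfrak{F}(q) = r'(e')} m_{\mathfrak{F},q}(e')$, which, unwinding the definition of the vertex multiplicities, equals $\sum_{e \in H(\Gamma)\,:\,\mathfrak{F}(e) = e'} \mu_{\mathfrak{F}}(e)$, i.e.\ the sum of the slopes over all half-edges of $\Gamma$ mapping to $e'$. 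Dually, these half-edges correspond to the singular points $p$ of $C$ lying over $p'$ whose associated edge maps to $e'$. So the combinatorial side of the claimed equality is $\sum_{p \,\mapsto\, p'} \mu_{\mathfrak{F}}(e_p)$, where $e_p$ is the edge of $\Gamma$ at $p$ and the sum is over singular points $p$ of $C$ over $p'$.

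\textbf{Computing the slope locally and matching it to the local degree.}

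The key step is to compute the slope $\mu_{\mathfrak{F}}(e_p) = l(p')/l(p)$ and to identify it with the contribution of $p$ to the degree of $f$. From the previous lemma we know $\langle l(p') \rangle \subseteq \langle l(p) \rangle$, so $l(p') = \mu \cdot l(p)$ for the positive integer $\mu = \mu_{\mathfrak{F}}(e_p)$. I would then pass to the \'etale-local (or formal) picture at $p$: the node of $C$ has local equation $xy = t_{p}$ where $t_p$ lifts $l(p) \in \overline M$, and similarly the node $p'$ of $C'$ has local equation $x'y' = t_{p'}$ with $t_{p'}$ lifting $l(p')$. Since $f$ is a morphism of log curves over $S$, strictness on the smooth locus forces $f$ to act on the two branches by $x' \mapsto (\text{unit})\cdot x^{a}$ and $y' \mapsto (\text{unit})\cdot y^{b}$ for positive integers $a,b$; compatibility with the node relations gives $a = b = \mu$, since $t_{p'} = \mu\, t_p$ at the level of characteristic monoids. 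The local degree of $f$ at $p$ — i.e.\ the length of the fibre, or equivalently the rank of $\mathcal{O}_{C,p}$ over $\mathcal{O}_{C',p'}$ — is then exactly $\mu$, because each branch is a degree-$\mu$ cover ramified only at the node.

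\textbf{Summing over the fibre.}

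Finally I would sum over the whole fibre $f^{-1}(p')$. Since $f$ is finite and flat, the degree of $f$ at $p'$ is the sum of the local degrees at the points $q \in f^{-1}(p')$; the points of $C$ lying over the node $p'$ are precisely the singular points $p$ whose edge maps to $e'$ (smooth points of $C$ cannot map to the node $p'$, and contracted edges do not contribute to $e'$). Combining with the local computation, $\deg_{p'}(f) = \sum_{p \,\mapsto\, p'} \mu = \sum_{p \,\mapsto\, p'}\mu_{\mathfrak{F}}(e_p) = m_{\mathfrak{F}}(e')$, which is the desired equality.

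\textbf{Expected main obstacle.}

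The step I expect to be the crux is justifying that the local action of $f$ on each branch is a genuine degree-$\mu$ power map with $\mu = \mu_{\mathfrak{F}}(e_p)$, and that the local degree is therefore $\mu$ rather than something larger. This requires using flatness (so that no component of $C$ collapses and the fibre has the expected length) together with the log-smoothness of $C/S$ and $C'/S$, which rigidifies the morphism on the branches via the characteristic monoids computed in the previous lemma. The compatibility of the two local power maps at the node — ensuring the same exponent on both branches — is exactly what pins $\mu$ down and makes the combinatorial slope equal the algebro-geometric local degree.
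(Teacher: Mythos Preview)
Your proof is correct and follows essentially the same approach as the paper's: both reduce to showing, for each singular preimage $p$ of $p'$, that the slope $\mu_{\mathfrak F}(e_p)=l(p')/l(p)$ coincides with the ramification degree of $f$ along each branch at $p$, and then sum over the fibre. The only difference is one of language: the paper carries out the local computation directly in the characteristic-monoid presentation $\overline M_{C,p}=\overline M\oplus_{\mathbb N}\mathbb N^2$ (observing that the induced map $\mathbb N^2\to\mathbb N^2$ sends $(1,1)$ to $(s,s)$), whereas you work with the equivalent \'etale-local chart $xy=t_p$ and read off the exponents from $x'\mapsto(\text{unit})x^a$, $y'\mapsto(\text{unit})y^b$.
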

\begin{proof}
Every point $p$ of $C$ lying over $p'$ is singular, so it suffices to show that the slope of $\frak F$ at the corresponding edge $e$ of $\Gamma$ is equal to the ramification degree of $f$ at $p$. For this we use an alternative presentation of the characteristic monoids: at $p$ we have
\begin{equation*}
    \ghost_{C, p} = \ghost \oplus_{\mathbb N} \mathbb N^2
\end{equation*}
where $\mathbb N \to \ghost$ sends $1$ to $l(p)$ and $\mathbb N \to \mathbb N^2$ sends $1$ to $(1,1)$. 
Similarly at $p'$ we have
\begin{equation*}
    \ghost_{C', p'} = \ghost \oplus_{\mathbb N} \mathbb N^2
\end{equation*}
where this time $\mathbb N \to \ghost$ sends $1$ to $l(p')$. In particular the map $\mathbb N^2 \to \mathbb N^2$ induced by $f$ must send $(1,1)$ to $(s,s)$ where $s = l(p')/l(p)$ is the slope. Hence the ramification degrees of the two branches are equal to each other and to the slope. 
\end{proof}

From the previous lemma we immediately deduce
\begin{lemma}
Suppose that $C'$ is not smooth. Then $\frak F\colon \Gamma \to \Gamma'$ is horizontally conformal, of degree equal to the degree of $f$. 
\end{lemma}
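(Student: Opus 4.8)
The plan is to reduce everything to the fundamental identity for finite morphisms of smooth projective curves, after recording that the hypothesis ``$C'$ is not smooth'' is there precisely to guarantee that $\Gamma'$ has at least one edge, so that $\deg(\mathfrak{F})$ is even defined. Given this, the degree statement will follow almost immediately from the previous lemma once horizontal conformality is in hand; so the real content is to verify horizontal conformality, i.e.\ that the quantity $m_{\mathfrak{F},v}(e')$ does not depend on the choice of half-edge $e' \in H_{\mathfrak{F}(v)}$.

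For horizontal conformality, I would fix a vertex $v \in V(\Gamma)$ with image $v' = \mathfrak{F}(v)$, corresponding to irreducible components $C_v \subseteq C$ and $C'_{v'} \subseteq C'$. Since $f$ is finite it contracts no component, so $f|_{C_v}$ is non-constant and induces a finite morphism of smooth projective curves $\tilde{C}_v \to \tilde{C}'_{v'}$ on the normalisations; write $d_v$ for its degree. I would then unwind the definition of $m_{\mathfrak{F},v}(e')$: a half-edge $e' \in H_{v'}$ corresponds to a branch of a node $p'$ on $C'_{v'}$, hence to a point $q' \in \tilde{C}'_{v'}$, and the half-edges $e \in H_v$ with $\mathfrak{F}(e) = e'$ correspond to the branches lying over that branch of $p'$. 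By (the proof of) the previous lemma every preimage of the singular point $p'$ is again singular, so these half-edges are in bijection with the entire fibre of $\tilde{C}_v \to \tilde{C}'_{v'}$ over $q'$, and each slope $\mu_{\mathfrak{F}}(e)$ is the ramification index at the corresponding point. Hence $m_{\mathfrak{F},v}(e') = \sum_{q \mapsto q'} e_q = d_v$ by the fundamental identity (the residue fields are algebraically closed, as $S$ is a geometric log point, so the identity reads $\sum_q e_q = d_v$). Since $d_v$ is independent of $e'$, the horizontal multiplicity at $v$ is well-defined with $m_{\mathfrak{F}}(v) = d_v$; as $v$ was arbitrary, $\mathfrak{F}$ is horizontally conformal.

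For the degree, I would then use that $C'$ is not smooth to choose an edge $e'$ of $\Gamma'$. Horizontal conformality makes $\deg(\mathfrak{F}) = m_{\mathfrak{F}}(e')$ well-defined and independent of the chosen half-edge, and the previous lemma identifies $m_{\mathfrak{F}}(e')$ with the local degree of $f$ at the node $p'$; since $f$ is finite and flat, its local degree is constant and equal to $\deg(f)$, giving $\deg(\mathfrak{F}) = \deg(f)$.

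The step I expect to be the main obstacle is the identification $m_{\mathfrak{F},v}(e') = d_v$, where one must be careful that the fibre of $\tilde{C}_v \to \tilde{C}'_{v'}$ over the branch point $q'$ consists exactly of the nodal branches counted by the slopes in the definition of $m_{\mathfrak{F},v}(e')$ --- this uses both that $f$ is non-contracting on components and that preimages of nodes are nodes --- and that these slopes are precisely the ramification indices entering the fundamental identity.
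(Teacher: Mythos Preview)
Your proposal is correct and is essentially the natural unpacking of what the paper means by ``immediately deduce'' from the previous lemma. The paper gives no further argument, so your verification of horizontal conformality via the degree formula on normalized components $\tilde{C}_v \to \tilde{C}'_{v'}$ is exactly the content being elided; the identification of slopes with ramification indices that you need is precisely what the proof of the previous lemma establishes (the induced map $\N^2 \to \N^2$ sends each generator to $s$ times itself).

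One small remark: the paper's terse ``immediate'' could be misread as saying that constancy of the \emph{global} multiplicity $m_{\mathfrak F}(e')$ (which the previous lemma gives directly, since the local degree of a finite flat map is constant) already implies horizontal conformality. It does not, and you are right to check the local condition at each vertex separately; your argument via the fundamental identity is the correct way to do this, and your care about why the fibre of $\tilde{C}_v \to \tilde{C}'_{v'}$ over $q'$ is exhausted by nodal branches (using that preimages of nodes are nodes and that $f$ contracts no component) is exactly the point that needs attention.
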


To complete our connection to the graph-theoretic geometric gonality we observe 
\begin{lemma}
Suppose that $C'$ has arithmetic genus 0. Then $\Gamma'$ is a tree. 
\end{lemma}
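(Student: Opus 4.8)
The plan is to invoke the standard genus formula for nodal curves, relating the arithmetic genus of $C'$ to the genera of its irreducible components and the topology of its dual graph. Since $C'$ is a log curve, its underlying scheme is a connected nodal curve over the geometric point $S$, and such a curve satisfies
\[
	p_a(C') = \sum_{v \in V(\Gamma')} g_v + b_1(\Gamma'),
\]
where $g_v$ denotes the geometric genus of the component corresponding to $v$, and $b_1(\Gamma') = |E(\Gamma')| - |V(\Gamma')| + 1$ is the first Betti number of the (connected) dual graph. This is the decomposition of $p_a$ into the ``vertex'' contribution and the ``loop'' contribution coming from the normalisation exact sequence.

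First I would recall or cite this formula, noting that it applies because the singularities of $C'$ are nodes, so the dual graph faithfully records which components meet at which nodes. Next I would observe that every term on the right-hand side is non-negative: each $g_v \geq 0$, and $b_1(\Gamma') \geq 0$ since $\Gamma'$ is connected (connectedness of $\Gamma'$ follows from connectedness of $C'$, which we have throughout). Therefore the hypothesis $p_a(C') = 0$ forces each summand to vanish; in particular $b_1(\Gamma') = 0$.

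Finally I would conclude: a connected graph with $b_1 = 0$ has no cycles, which is precisely the definition of a tree given in Subsection \ref{ss_ggon}. Hence $\Gamma'$ is a tree.

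I do not expect any serious obstacle here; the only point requiring a little care is making sure the genus formula is quoted in a form valid over a geometric log point rather than merely over an algebraically closed field, but since the formula depends only on the underlying nodal scheme $\underline{C'}$ and its normalisation, the log structure plays no role and the classical statement applies verbatim.
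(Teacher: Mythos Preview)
Your argument is correct and is exactly the standard one: the genus formula $p_a(C') = \sum_v g_v + b_1(\Gamma')$ for a connected nodal curve, together with non-negativity of each term, forces $b_1(\Gamma') = 0$ once $p_a(C') = 0$, so the connected dual graph is a tree. Your closing remark that the log structure is irrelevant here is also to the point.

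The paper itself does not prove this lemma; it is stated as an observation without proof, presumably because the authors regard it as standard. So there is no alternative approach to compare against --- your write-up simply supplies the omitted (and expected) justification.
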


\begin{proposition}
Suppose that $C'$ has arithmetic genus 0 and is not smooth. Then the degree of $f$ is greater than or equal to the geometric gonality of the $\ghost$-metrized graph $\Gamma$. 
\end{proposition}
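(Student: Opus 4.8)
The plan is to verify that the morphism $\mathfrak{F}\colon \Gamma \to \Gamma'$ induced by $f$ satisfies every requirement in the definition of geometric gonality (Definition~\ref{d_ggon}): its target should be a tree, it should be harmonic (that is, horizontally conformal), and it should be non-degenerate. Two of these are already supplied by the preceding lemmas, which show that $\Gamma'$ is a tree (using that $C'$ has arithmetic genus $0$) and that $\mathfrak{F}$ is horizontally conformal with $\deg(\mathfrak{F}) = \deg(f)$ (using that $C'$ is not smooth). Granting non-degeneracy, $\mathfrak{F}$ becomes an admissible morphism in Definition~\ref{d_ggon}, whence $\ggon(\Gamma) \le \deg(\mathfrak{F}) = \deg(f)$, which is exactly the assertion.

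So the real content is non-degeneracy, namely $m_{\mathfrak{F}}(v) > 0$ for every $v \in V(\Gamma)$. I first note that $\Gamma'$ has no isolated vertices: since $C'$ is nodal, not smooth, and of arithmetic genus $0$, each of its nodes joins two distinct components (a self-node would produce a loop, contradicting that $\Gamma'$ is a tree), so $\Gamma'$ is a tree on at least two vertices and every vertex of $\Gamma'$ is incident to an edge. Now fix $v \in V(\Gamma)$, corresponding to an irreducible component $Z$ of $C$, and put $v' = \mathfrak{F}(v)$, so that the associated component $Z'$ of $C'$ is the image $f(Z)$. Since $f$ is finite it contracts no component, so $f$ restricts to a finite surjection $f|_Z\colon Z \to Z'$ onto $Z'$. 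Choose any half-edge $e' \in H_{v'}$; it corresponds to a node $n'$ of $C'$ lying on $Z'$. By surjectivity of $f|_Z$ there is a point $n \in Z$ with $f(n) = n'$, and because $n$ lies over the singular point $n'$ it is itself a node of $C$ (as already observed in the proof that $\mathfrak{F}$ is a well-defined morphism). The branch of $n$ on $Z$ is then a half-edge $e \in H_v$ with $\mathfrak{F}(e) = e'$ and slope $\mu_{\mathfrak{F}}(e) \ge 1$, so $m_{\mathfrak{F}}(v) = m_{\mathfrak{F},v}(e') \ge \mu_{\mathfrak{F}}(e) > 0$.

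I expect the non-degeneracy step to be the only real obstacle, and within it the delicate passage is from ``$n'$ lies on $Z'$'' to ``$n'$ is the image of a node of $C$ contained in $Z$''. This relies on two facts already in play: finiteness of $f$, which guarantees that $f|_Z$ surjects onto the whole component $Z'$ rather than contracting $Z$ or missing part of $Z'$, and the structural observation that every point of $C$ over a singular point of $C'$ is again a node. Once non-degeneracy is secured, no further computation is needed: the conclusion follows by combining the three properties of $\mathfrak{F}$ with the identity $\deg(\mathfrak{F}) = \deg(f)$.
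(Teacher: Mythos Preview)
Your proof is correct and follows the same route as the paper, which simply invokes the preceding lemmas to conclude that $\mathfrak{F}$ is a harmonic morphism to a tree of degree $\deg(f)$. In fact your argument is more careful: the paper's one-line proof does not explicitly address non-degeneracy, whereas you correctly note that Definition~\ref{d_ggon} requires it and supply the verification via surjectivity of $f|_Z$ onto its image component and the fact (already used in the previous lemma) that preimages of nodes are nodes.
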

\begin{proof}
The map $\frak F\colon \Gamma \to \Gamma'$ is a harmonic morphism to a tree, and has degree equal to the degree of $f$. 
\end{proof}

Using that the degree is locally constant in flat families we can deduce a `global' version of the above proposition, which can be thought of as a `specialization lemma' for monoid-metrised graphs. 

\begin{theorem}\label{log_theorem}
Let $S$ be a connected log scheme, and let $f\colon C\to C'$ be a flat $S$-morphism of proper vertical log curves, with $C'$ of arithmetic genus 0. Let $s \in S$ be a geometric point with $C'_s$ singular, and write $\ggon(s)$ for the geometric gonality of the graph associated to $C_s$, and $\gon(s)$ for the divisorial gonality. Then
\begin{equation*}
\deg f \ge \ggon(s)\ge \gon(s). 
\end{equation*}
\end{theorem}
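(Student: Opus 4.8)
The second inequality $\ggon(s)\ge\gon(s)$ is immediate: it is nothing but Theorem \ref{t_gonggon} applied to the $\ghost$-metrised graph associated to the fibre $C_s$. So the real work is in the first inequality $\deg f\ge\ggon(s)$, and the plan is to reduce it to the geometric-log-point case established in the Proposition above by base-changing to the fibre over $s$.

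First I would restrict $f$ to the fibre over the geometric point $s$, obtaining $f_s\colon C_s\to C'_s$. Since flatness, properness and verticality are all stable under base change, $f_s$ is again a flat morphism of proper vertical log curves, but now over the geometric log point $s$, so the hypotheses of the earlier lemmas are met and $f_s$ induces a horizontally conformal morphism $\mathfrak F_s\colon \Gamma_{C_s}\to\Gamma_{C'_s}$ of degree $\deg f_s$. To invoke the preceding Proposition I must check its two remaining hypotheses on $C'_s$: that $C'_s$ is singular, which holds by assumption on $s$; and that $C'_s$ has arithmetic genus $0$, which follows because the arithmetic genus is the Euler characteristic of the structure sheaf and hence is locally constant in the flat proper family $C'/S$, so it takes the value $0$ on every fibre. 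Granting these, the Proposition yields $\deg f_s\ge\ggon(\Gamma_{C_s})=\ggon(s)$ (and in particular $\ggon(s)<\infty$, since the Proposition exhibits $\mathfrak F_s$ as a harmonic morphism to a tree of degree $\deg f_s$).

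It then remains to pass from $\deg f_s$ back to $\deg f$. As $f$ is flat and finite, its degree is locally constant on $S$; since $S$ is connected, this degree is globally constant and in particular equals $\deg f_s$. Chaining the inequalities gives
\begin{equation*}
\deg f \;=\; \deg f_s \;\ge\; \ggon(s) \;\ge\; \gon(s),
\end{equation*}
as desired. I expect the only genuine friction to be bookkeeping rather than mathematics: one must ensure that forming the $\ghost$-metrised dual graph commutes with base change to $s$, so that the graph produced by the Proposition is exactly the graph whose geometric gonality defines $\ggon(s)$, and that the pulled-back log structure makes $s$ a geometric log point with $C_s,C'_s$ proper vertical log curves over it. These compatibilities are standard for dual graphs of log curves, so the hard part is simply confirming that each hypothesis of the Proposition descends cleanly to the fibre.
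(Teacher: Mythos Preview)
Your proposal is correct and follows exactly the approach the paper indicates: the paper's own ``proof'' is the single sentence preceding the theorem, namely that the result follows from the preceding Proposition together with local constancy of the degree in flat families, and you have simply unpacked that sentence (base-change to the fibre, check arithmetic genus $0$ and singularity of $C'_s$, apply the Proposition, then use connectedness of $S$ to identify $\deg f$ with $\deg f_s$; the second inequality is Theorem~\ref{t_gonggon}).
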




%

\begin{remark}\label{log_eg}
We give a geometric interpretation of this result in the case of the 2-gon over $\mathbb N^2$ (Example \ref{e_PLconstant}). In particular this example shows that the monoid-metrised gonality gives sharper bounds on geometric gonalities than previous approaches. 

Firstly, if one ignores the metrics, there is a unique harmonic degree 2 map to the tree with one edge. This reflects a geometric fact. Define a curve $C$ by glueing two copies of the projective line $\mathbb P^1$ and the points $(1:1)$ and $(1:-1)$, and a curve $D$ by glueing two copies of $\mathbb P^1$ just at $(1:1)$. Then we can make a degree-$2$ map $$f\colon C \to D$$ where on the first component we take the squaring map to the first $\mathbb P^1$, and on the second component the squaring map to the second. 

If one works with classically metrised graphs (both edges being given length 1), then this harmonic degree 2 map still exists. On a geometric level, this reflects the fact that a very carefully-chosen 1-parameter deformation of $C$ admits a map to a 1-parameter deformation of $D$.

Finally, suppose we equip the graph with the $\mathbb N^2$-metric from Example \ref{e_PLconstant}. Then the geometric gonality is infinite (see Remark \ref{rem:infinite_gonality}), so by Theorem \ref{log_theorem} a corresponding morphism of log curves does not exist. This implies that the map $f$ does not extend to a `generic' deformation of the curve $C$.
\end{remark}

\end{document}